%
%
%
\pdfpagewidth=8.5truein
\pdfpageheight=11truein
%

\documentclass[11pt]{article}

\usepackage{amssymb, amsmath, fullpage, amsthm}

\parskip2mm

\newtheorem{theorem}{Theorem}[section]
\newtheorem{lemma}[theorem]{Lemma}
\newtheorem{proposition}[theorem]{Proposition}
\newtheorem{corollary}[theorem]{Corollary}

\newtheorem{definition}[theorem]{Definition}
\newtheorem{example}[theorem]{Example}
\newtheorem{continuation}[theorem]{Continuation of Example}
\newtheorem{remark}[theorem]{Remark}

 \numberwithin{equation}{section}

\newcommand{\hz}{\widehat{0}}
\newcommand{\ho}{\widehat{1}}

\newcommand{\av}{{\bf a}}
\newcommand{\bv}{{\bf b}}
\newcommand{\cv}{{\bf c}}
\newcommand{\dv}{{\bf d}}
\newcommand{\ab}{\av\bv}
\newcommand{\cd}{\cv\dv}
\newcommand{\zab}{{\mathbb Z}\langle\av,\bv\rangle}
\newcommand{\zcd}{{\mathbb Z}\langle\cv,\dv\rangle}

\newcommand{\maa}{\av\av}
\newcommand{\mba}{\bv\av}
\newcommand{\mab}{\av\bv}
\newcommand{\mbb}{\bv\bv}

\newcommand{\mccc}{\cv^{3}}

\newcommand{\mdc}{\dv\cv}

\newcommand{\tensor}{\otimes}
\newcommand{\capdots}{\cap \cdots \cap}

\newcommand{\subseteqdots}{\subseteq \cdots \subseteq}

\DeclareMathOperator{\link}{link}

\DeclareMathOperator{\wt}{wt}

\newcommand{\Nnn}{{\mathbb N}}
\newcommand{\Rrr}{{\mathbb R}}
\newcommand{\Zzz}{{\mathbb Z}}

\newcommand{\PPP}{\widehat{P}}

\newcommand{\zetabar}{\overline{\zeta}}
\newcommand{\mubar}{\overline{\mu}}
\newcommand{\fbar}{\overline{f}}
\newcommand{\hhbar}{\overline{h}}

\begin{document}

\author{Richard Ehrenborg\thanks{Corresponding author:
Department of Mathematics,
University of Kentucky,
Lexington, KY 40506-0027,
USA,
{\tt jrge@ms.uky.edu},
phone +1 (859) 257-4090,
fax +1 (859)  257-4078.}
and Margaret Readdy\thanks{Department of Mathematics,
University of Kentucky,
Lexington, KY 40506-0027,
USA,
{\tt readdy@ms.uky.edu}.}
}

\title{Manifold arrangements}

\date{}

\maketitle

\begin{abstract}
We determine the $\cd$-index of the induced 
subdivision
arising from a manifold arrangement.
This generalizes earlier results in several directions:
($i$) One can work with manifolds other than the $n$-sphere
and $n$-torus,
($ii$) the induced subdivision is a Whitney stratification,
and 
($iii$) the submanifolds in the arrangement are no
longer required
to be codimension one.

\vspace*{2 mm}

\noindent
{\em 2010 Mathematics Subject Classification.}
Primary 06A07;
Secondary 52B05, 57N80.

\vspace*{2 mm}

\noindent
{\em Key words and phrases.}
$\cd$-index;
Euler flag enumeration;
manifold arrangements;
spherical arrangements;
toric arrangements;
Whitney stratifications.
\end{abstract}

\section{Introduction}

In the paper~\cite{Ehrenborg_Goresky_Readdy}
Ehrenborg, Goresky and Readdy extend the theory
of flag enumeration in polytopes and regular CW-complexes
to Whitney stratified manifolds.  
Their key insight is to replace flag enumeration with
Euler flag enumeration, that is,
a chain of strata is weighted by the Euler characteristic
of each link in the chain;
see Theorem~\ref{theorem_Whitney_Eulerian}.
The classical results for the generalized Dehn--Sommerville
relations and
the $\cd$-index ~\cite{Bayer_Billera,Bayer_Klapper, Stanley} 
carry over to this setting.

The $\cd$-index of a polytope, and more generally,
an Eulerian poset, is
a minimal encoding of the flag vector
having coalgebraic structure
which reflects the geometry of the polytope~\cite{Ehrenborg_Readdy}.
It is known that the coefficients of the
$\cd$-index of polytopes, spherically-shellable
posets and Gorenstein* posets are nonnegative~\cite{Karu,Stanley}.
Hence, inequalities among the $\cd$-coefficients imply inequalities
among the flag vector entries of the original objects.
For Whitney stratified manifolds, 
the coefficients of the $\cd$-index
are no longer restricted to being
nonnegative~\cite[Example~6.15]{Ehrenborg_Goresky_Readdy}.
This broadens the research program of understanding
flag vectors of polytopes to that of manifolds.
See~\cite{Ehrenborg_lifting} for the best currently known
inequalities for flag vectors of polytopes.

One would like to understand the combinatorics of naturally
occurring Whitney stratified spaces.  One such example
is that of manifold arrangements. These arrangements are motivated by
subspace arrangements in Euclidean space.
Classically Goresky and MacPherson determined the
cohomology of the complement of subspace arrangements
using intersection homology~\cite{Goresky_MacPherson}.
The stable homotopy type of the complement
was studied by Ziegler and \v{Z}ivaljevi\'{c}~\cite{Ziegler_Zivaljevic}.
Arrangements of submanifolds and subvarieties
have been considered in connection with
blowups in algebraic geometry;   see
for instance
MacPherson and Procesi's work on conical
stratification~\cite{MacPherson_Procesi},
and Li's work on arrangements of 
subvarieties~\cite{Li}.
Face enumeration issues were examined by
Zaslavsky~\cite{Zaslavsky_2} and Swartz~\cite{Swartz}.
See also~\cite{Forge_Zaslavsky}
where Forge and Zaslavsky extend the notion of hyperplane
to topological hyperplanes.

The case of toric arrangements,
that is, a collection of subtori inside
an $n$-dimensional torus~$T^{n}$, was studied by
Novik, Postnikov and Sturmfels~\cite{Novik_Postnikov_Sturmfels}
in reference to minimal cellular resolutions of unimodular matroids.
Other results for toric arrangements include that of 
De Concini and Procesi~\cite{De_Concini_Procesi},
who computed the 
cohomology of the complement 
and related this to polytopal lattice point enumeration,
and
D'Antonio and Delucchi~\cite{dAntonio_Delucchi},
who
considered the homotopy type 
and the fundamental group of the complement.

Billera, Ehrenborg and Readdy studied 
oriented matroids and the  lattice of 
regions~\cite{Billera_Ehrenborg_Readdy}.
The $\cd$-index of this lattice only depends upon the
flag $f$-vector of the intersection lattice, which is
a smaller poset.
Their work shows how to determine the $\cd$-index
of induced subdivisions of the sphere $S^{n}$.
Ehrenborg, Readdy and Slone 
considered toric arrangements that induce regular
subdivisions of the torus $T^{n}$~\cite{Ehrenborg_Readdy_Slone}.
Yet again, the associated $\cd$-index depends only upon the
flag $f$-vector of the intersection poset.

In this paper we consider arrangements of manifolds
and the subdivisions they induce.
In the manifold setting the computation of the $\cd$-index
now depends upon the intersection poset
and the Euler characteristic  of the elements of this quasi-graded
poset. 
This extends the earlier studied spherical and toric
arrangements~\cite{Billera_Ehrenborg_Readdy,Ehrenborg_Readdy_Slone}.

In order to obtain this generalization,
we first review the notion of a quasi-graded poset.
See Section~\ref{section_preliminaries}.
This allows us to
work with intersection posets that are not necessarily graded.
A short discussion of the properties of the Euler characteristic with
compact support and its relation to the Euler characteristic
is included.
We then introduce manifold arrangements
in Section~\ref{section_manifold_arrangements}.
The intersection poset of an arrangement is defined.
This notion is not unique. However, this gives us the advantage of
choosing the most suitable poset for calculations.
In Section~\ref{section_complement}
the Euler characteristic of
the complement is computed for these arrangements.
This is a manifold analogue
of the classical result concerning the number of regions
of a hyperplane arrangement~\cite{Zaslavsky_1}.
In Section~\ref{section_cd}
we review the notions of Eulerian
quasi-graded posets, the $\cd$-index
and Whitney stratified spaces.

In the oriented matroid setting
the coalgebraic structure of flag vector enumeration
was essential in developing the results.
In Section~\ref{section_coalgebra}
we describe the underlying coalgebraic structure
in the more general quasi-graded poset setting
and summarize the essential operators
from~\cite{Billera_Ehrenborg_Readdy}.
Using these operators, we define the
operator ${\cal G}$ that will be key
to the main result.

In Section~\ref{section_induced}
we state and prove the main result;
see Theorem~\ref{theorem_main}.
The previous proof techniques 
for studying subdivisions induced by
oriented matroids and toric arrangements
depended upon finding a natural map from the face poset
of the given subdivision to the intersection poset and studying
the inverse image of a chain under this map.
See the proofs of~\cite[Theorem~4.5]{Bayer_Sturmfels},
\cite[Theorem~3.1]{Billera_Ehrenborg_Readdy}
and~\cite[Theorems~3.12 and~4.10]{Ehrenborg_Readdy_Slone}.
In the more general setting of manifold arrangements
we can now avoid this step
by forming another
Whitney stratification having the same $\cd$-index;
see Proposition~\ref{proposition_T_Q}.
Namely, we can choose each strata to be a submanifold
in the intersection poset without those points included in
smaller submanifolds.
(It is customary to refer to a single stratum by the plural strata.)
In the classical case of hyperplane arrangements
this gives a stratification into disconnected strata.

Finally
in Section~\ref{section_sphere_torus}
we revisit two important cases studied earlier:
spherical and toric arrangements.
These arrangements have
the property that the Euler characteristic of an element
of dimension~$k$
in the intersection lattice only depends upon $k$,
that is, the Euler characteristic is given by 
$1+(-1)^{k}$, respectively the Kronecker delta $\delta_{k,0}$.
In both of these cases Theorem~\ref{theorem_main}
reduces to a result which only depends on the intersection poset.
The original work for spherical and toric
arrangements required the induced subdivision to yield a
regular subdivision on the sphere~$S^n$, respectively, the torus
$T^n$~\cite{Billera_Ehrenborg_Readdy,Ehrenborg_Readdy_Slone}.
This regularity condition is no longer necessary in the arena
of Whitney stratified spaces.

An illuminating sample of this theory is
to consider a complete flag in $n$-dimensional
Euclidean space. Intersecting this flag with
the $(n-1)$-dimensional unit sphere $S^{n-1}$ gives
a (nested) arrangement of spheres, one of each dimension.
The intersection poset is a chain of rank $n$.
The induced subdivision of the sphere consists of
two cells of each dimension $i$, $0 \leq i \leq n-1$.
The face poset is the butterfly poset of rank $n+1$.
It is straightforward to see that the classical
Billera--Ehrenborg--Readdy formula holds in this case;
see Example~\ref{example_complete_flag}.

We end with some open questions in the concluding remarks.

\section{Preliminaries}
\label{section_preliminaries}

A {\em quasi-graded poset} is a triplet $(P, \rho, \zetabar)$
where
\begin{itemize}
\item[(i)]
$P$ is a finite poset with
a minimal element $\hz$ and maximal element $\ho$,
\item[(ii)]
$\rho$ is a function from $P$ to $\Nnn$ such that
$\rho(\hz) = 0$ and $x < y$ implies $\rho(x) < \rho(y)$, and
\item[(iii)] $\zetabar$ is a function in the incidence algebra of $P$
such that for all $x$ in $P$ we have $\zetabar(x,x) = 1$.
\end{itemize}
The notion of quasi-graded poset is due to the authors and Goresky.
See~\cite{Ehrenborg_Goresky_Readdy} for further details
and see~\cite{EC1} for standard poset terminology.
In this paper we will assume $\zetabar$ is integer-valued,
though in general this is not necessary.

Condition (iii) guarantees that $\zetabar$ is invertible in the incidence algebra of $P$. Let $\mubar$ denote the inverse of $\zetabar$.
Observe that when the weighted zeta function $\zetabar$ is
the classical zeta function $\zeta$,
that is, $\zeta(x,y) = 1$ for all $x \leq y$,
the function~$\mubar$ is the M\"obius function $\mu$.

Recall that a {\em subspace arrangement}
is a collection $\{V_{i}\}_{i=1}^{m}$
of subspaces in $n$-dimensional Euclidean space.
We allow a subspace $V_{i}$ to be a proper subspace
of another subspace $V_{j}$ of the arrangement.
However, if a subspace $V_{i}$ is the intersection
of other subspaces in the arrangement,
that is,
$V_{i} = \bigcap_{j \in J} V_{j}$
for $J \subseteq \{1,2, \ldots, m\} - \{i\}$,
the subspace $V_{i}$ is redundant
for our purposes, and can be removed.

The {\em intersection lattice} of a subspace arrangement forms
a quasi-graded poset $(P,\rho,\zetabar)$ where
$P$ is the collection of all intersections of subspaces
ordered by reverse inclusion.
The minimal element is
the ambient space $\Rrr^{n}$ and the
maximal element is the intersection
$V_{1} \capdots V_{m}$.
The rank function $\rho$ is given by the codimension,
that is, $\rho(x) = n - \dim(x)$.
Finally, we let the weighted zeta function be given by
the classical zeta function $\zeta$,
where $\zeta(x,y) = 1$ for $x \leq y$.

One of the topological tools
we will need is the {\em Euler characteristic
with compact support}~$\chi_{c}$. For a reference, see
the article by Gusein-Zade~\cite{Gusein-Zade}.
We review two essential properties.
First, the Euler characteristic with compact support
is a {\em valuation}, that is, it satisfies
\begin{equation}
\chi_{c}(A) + \chi_{c}(B)
    = 
\chi_{c}(A \cap B) + \chi_{c}(A \cup B)  ,
\end{equation}
where the two sets $A$ and $B$ are 
formed by finite intersections, finite unions and complements
of locally closed sets.
(A {\em locally closed set}
is the intersection of an open set and a closed
set.)
The second key property relates 
the usual Euler characteristic~$\chi$ with~$\chi_{c}$.
\begin{proposition}
For an $n$-dimensional manifold $M$ that is
not necessarily compact, the Euler characteristic $\chi(M)$
and the Euler characteristic with compact support $\chi_{c}(M)$
satisfy the relation
\begin{equation}
   \chi_{c}(M)  = (-1)^{n} \cdot \chi(M)  .  
\end{equation}
\label{proposition_sign}
\end{proposition}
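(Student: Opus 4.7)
My plan is to derive the identity from Poincaré--Lefschetz duality. The first step would be to fix a field of coefficients, say $\mathbb{Q}$, and to arrange that $M$ has finite-type cohomology, so that both $\chi(M)$ and $\chi_{c}(M)$ are defined as alternating sums of Betti numbers. This finite-type condition is not restrictive in the manifold arrangement settings used later in the paper, and one can assume it throughout without loss.

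I would then handle the orientable case directly: Poincaré duality gives a natural isomorphism $H^{k}_{c}(M;\mathbb{Q}) \cong H_{n-k}(M;\mathbb{Q})$, and the universal coefficient theorem over a field identifies this with $H^{n-k}(M;\mathbb{Q})$. Writing $\chi_{c}(M)$ as an alternating sum of compactly supported Betti numbers and reindexing via $j = n - k$ pulls out the global sign $(-1)^{n}$ and leaves $\chi(M)$, which is the desired identity.

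For non-orientable $M$ I would pass to the orientation double cover $\pi\colon \widetilde{M} \to M$, which is an oriented $n$-manifold. Both Euler characteristics are multiplicative under finite covers, so $\chi(\widetilde{M}) = 2 \chi(M)$ and $\chi_{c}(\widetilde{M}) = 2 \chi_{c}(M)$. The orientable case applies to $\widetilde{M}$, and dividing by two returns the identity for $M$. Alternatively, one could invoke twisted Poincaré duality with the orientation local system and note that rank-one twisting does not affect Euler characteristics.

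The most substantive step is the invocation of the compactly supported form of Poincaré duality; after that the argument is essentially combinatorial. The main obstacle, if any, is purely notational: one must fix a finite-type hypothesis under which both sides of the identity make sense. Given that this proposition is classical, I would expect the authors to give a short proof consisting of the duality isomorphism, the reindexing, and a textbook citation for the non-orientable extension.
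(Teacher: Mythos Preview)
Your proof is correct and follows essentially the same idea as the paper --- invoke Poincar\'e duality for possibly non-compact manifolds and reindex --- but the paper handles the non-orientable case differently. Rather than working over $\mathbb{Q}$ and then passing to the orientation double cover, the authors compute from the start over the field $\mathbb{Z}_{2}$, where Poincar\'e duality $H^{i}_{c}(M;\mathbb{Z}_{2}) \cong H_{n-i}(M;\mathbb{Z}_{2})$ holds for all manifolds regardless of orientability. This collapses your two cases into one line. Your double-cover argument is perfectly valid and is a standard workaround, but it costs you the extra step of checking multiplicativity of both $\chi$ and $\chi_{c}$ under finite covers; the $\mathbb{Z}_{2}$ trick buys the authors a shorter, case-free proof at the price of a slightly less familiar coefficient choice.
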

\begin{proof}
By Poincar\'e duality
for non-compact manifolds $M$ we have
\begin{equation*}
H_{c}^{i}(M; \Zzz_{2}) \cong H_{n-i}(M; \Zzz_{2})  .
\label{equation_Poincare}
\end{equation*}
The result follows by
taking dimension of this isomorphism,
multiplying by the sign $(-1)^{i}$ and
summing over all~$i$.
\end{proof}

Observe that by computing the
(co)-homology groups over the field of two elements,
we also cover the case when the manifold is non-orientable.

\section{Manifold arrangements}
\label{section_manifold_arrangements}

Given an $n$-dimensional compact manifold $M$
without boundary
and a collection of submanifolds
$\{N_{i}\}_{i=1}^{m}$ of $M$
each without boundary,
we call this collection a {\em manifold arrangement}
if it satisfies
Bott's~\cite[Section~5]{Bott}
{\em clean intersection property},
defined as follows.
For every point $p$ in the manifold~$M$,
there exist
(i) a neighborhood $U$ of $p$,
(ii) a neighborhood $W$ in $\Rrr^{n}$ of the origin,
(iii) a subspace arrangement $\{V_{i}\}_{i=1}^{k}$ in $\Rrr^{n}$,
and
(iv) a diffeomorphism $\phi : U \longrightarrow W$
such that
the point $p$ is mapped to the origin
and
the collection of manifolds
restricted to the neighborhood~$U$,
that is,
$\{N_{i} \cap U\}_{i=1}^{m}$
is mapped to the restriction of the subspace arrangement 
$\{V_{i} \cap W\}_{i=1}^{k}$.

Similar to the setup for subspace arrangements we
allow a manifold $N_{i}$ to be a proper submanifold
of another manifold $N_{j}$ in the arrangement;
see Examples~\ref{example_complete_flag}
and~\ref{example_toric_complete_flag}.
A manifold $N_{i}$ is redundant
if it is the intersection
of other manifolds in the arrangement,
that is,
$N_{i} = \bigcap_{j \in J} N_{j}$
for $J \subseteq \{1,2, \ldots, m\} - \{i\}$.

\begin{example}
{\rm
Let $M$ be the sphere $x^{2} + y^{2} + z^{2} = 2$,
and
let $\{N_{1},N_{2}\}$ consist of the two circles
$x^{2} + y^{2} = 1$, $z=1$;
and
$x=1$, $y^{2} + z^{2} = 1$.
Observe that the line $x=z=1$
is tangent to both circles at their point
of intersection $p = (1,0,1)$.
Hence at the point $p$
the clean intersection property
is not satisfied
so that $\{N_{1},N_{2}\}$ is not a manifold arrangement.
}
\end{example}

\begin{proposition}
An intersection $\bigcap_{i \in I} N_{i}$
where $I \subseteq \{1,\ldots,m\}$
in a manifold arrangement
$\{N_{i}\}_{i=1}^{m}$ of a compact manifold $M$
consists of a disjoint union of a finite number of
connected manifolds.
\label{proposition_components}
\end{proposition}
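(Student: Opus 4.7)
The plan is to use the clean intersection property at points of $M$ together with the compactness of $M$. Write $K = \bigcap_{i \in I} N_{i}$. I will establish, in order, that ($a$) $K$ is closed in $M$, and hence compact; ($b$) $K$ is locally Euclidean at each of its points; and ($c$) $K$ has only finitely many connected components.

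Step ($a$) uses the clean intersection property at points \emph{outside} the arrangement. Given an index $i$ and a point $p \notin N_{i}$, apply the property at $p$ to obtain a neighborhood $U$ of $p$ and a local model $\phi \colon U \longrightarrow W$ whose traces $\phi(N_{i'} \cap U)$ are of the form $V_{j(i')} \cap W$ for a local subspace arrangement $\{V_{j}\}_{j=1}^{k}$. Since every linear subspace $V_{j}$ contains the origin $\phi(p) = 0$, any nonempty set $N_{i'} \cap U$ must contain $p$; in particular $N_{i} \cap U = \emptyset$. Thus $M \setminus N_{i}$ is open, each $N_{i}$ is closed, and $K$ is closed in $M$. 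Compactness of $K$ then follows from compactness of $M$.

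Step ($b$) uses the clean intersection property at points of $K$. Given $p \in K$, pick a local model $\phi \colon U \longrightarrow W$ as above. Intersecting the identifications $\phi(N_{i} \cap U) = V_{j(i)} \cap W$ over $i \in I$ yields
\[
\phi(K \cap U) \;=\; \Bigl(\, \bigcap_{i \in I} V_{j(i)} \Bigr) \cap W,
\]
which is an open neighborhood of the origin in the linear subspace $\bigcap_{i \in I} V_{j(i)}$. Hence $K \cap U$ is diffeomorphic to an open subset of a Euclidean space. The local dimension may depend on $p$, so different connected components of $K$ may be manifolds of different dimensions, but each component is itself a manifold without boundary.

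Step ($c$) is immediate: the connected components of a locally Euclidean space are open, so they form a pairwise disjoint open cover of the compact space $K$, which must therefore be finite. The only point requiring care is step ($a$): one must remember that the clean intersection hypothesis is imposed at every point of $M$, not merely at points of the arrangement, in order to conclude that each $N_{i}$ (and thus $K$) is closed in $M$. Once this is established, the remaining steps are direct consequences of the local linear model together with compactness.
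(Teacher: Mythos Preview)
Your argument is correct, and the organization is in some ways tighter than the paper's. The paper argues finiteness by contradiction: choose a point from each of infinitely many components, extract a convergent subsequence by compactness of $M$, and observe that the clean intersection property fails at the limit point~$p$ (since any local linear model would force $K$ to be connected near~$p$). It then remarks in one sentence that the local model makes each component a manifold. Your route is different: you first establish that each $N_{i}$, and hence $K$, is closed in $M$---a point the paper's argument actually needs (the limit point~$p$ must lie in $K$ for the contradiction to bite) but does not justify. You then deduce the locally Euclidean structure of $K$ from the local model and conclude finiteness from the elementary fact that a compact, locally Euclidean space has open components and therefore only finitely many. Your step~($a$), applying the clean intersection property at points \emph{outside} the arrangement to show closedness, is a genuine addition; it makes explicit a hypothesis that the paper's proof uses tacitly. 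The trade-off is that the paper's contradiction argument is shorter once closedness is granted, while your direct argument is self-contained and avoids the slightly informal ``observe that clean intersection fails'' step.
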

\begin{proof}
Assume that the intersection
$\bigcap_{i \in I} N_{i}$
consists of an infinite number
of connected components.
Pick a point $p_{j}$ from each connected component.
Since $M$ is compact the sequence $\{p_{j}\}_{j \geq 1}$
has a convergent subsequence.
Let $p$ be the limit point of this subsequence.
Observe now that the clean intersection property does
not hold at $p$, contradicting the assumption of the
existence of an infinite number of components.

The clean intersection property implies that the
neighborhood of a point in the intersection
$\bigcap_{i \in I} N_{i}$ is a relatively open ball,
that is, each connected component is a manifold.
\end{proof}

\begin{example}
{\rm
To illustrate why compactness is a necessary condition,
consider the two curves $y = \sin(x)$ and $y = -\sin(x)$
in the plane ${\mathbb R}^{2}$.
They intersect in an infinite number of points.
}
\end{example}

The connected components in
Proposition~\ref{proposition_components}
could be manifolds of different dimensions.
We illustrate this behavior
in Example~\ref{example_different_dimensions}.
\begin{example}
{\rm
Let $A$, $B$, $C$ and $D$
be the following four $2$-dimensional spheres in ${\mathbb R}^{4}$:
\begin{align*}
A & =  \{(x,y,z,w) \: : \: x=0, (y-1)^{2} + z^{2} + w^{2} = 1\}, \\
B & =  \{(x,y,z,w) \: : \: x^{2} + y^{2} + (z-1)^{2} = 1, w = 0\}, \\
C & =  \{(x,y,z,w) \: : \: (x-9)^{2} + y^{2} + z^{2} = 2, w = 0\}, \\
D & =  \{(x,y,z,w) \: : \: (x-11)^{2} + y^{2} + z^{2} = 2, w = 0\}.
\end{align*}
The spheres $A$ and $B$ intersect in two points,
the spheres $C$ and $D$ intersect in a circle,
and there are no other intersections.
Now construct the connected sums $A\#C$ and $B\#D$ by attaching
disjoint tubes.
We obtain two spheres
$A\#C$ and $B\#D$ which intersect in two points and a circle.
Finally, take the
one-point compactification
of ${\mathbb R}^{4}$ to obtain an arrangement in the
four-dimensional sphere~$S^{4}$.
}
\label{example_different_dimensions}
\end{example}

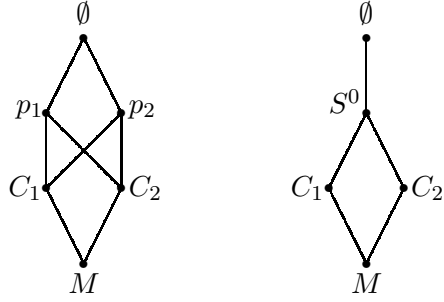
\begin{figure}
\setlength{\unitlength}{1mm}
\begin{center}
\begin{picture}(10,30)(0,0)

\qbezier(5,0)(5,0)(0,10)
\qbezier(5,0)(5,0)(10,10)

\qbezier(0,10)(0,10)(0,20)
\qbezier(0,10)(0,10)(10,20)
\qbezier(10,10)(10,10)(0,20)
\qbezier(10,10)(10,10)(10,20)

\qbezier(0,20)(0,20)(5,30)
\qbezier(10,20)(10,20)(5,30)

\put(5,0){\circle*{1}}
\put(0,10){\circle*{1}}
\put(10,10){\circle*{1}}
\put(0,20){\circle*{1}}
\put(10,20){\circle*{1}}
\put(5,30){\circle*{1}}

\put(3,-4){$M$}
\put(-5,9){$C_{1}$}
\put(11,9){$C_{2}$}
\put(-4,20){$p_{1}$}
\put(11,20){$p_{2}$}
\put(4,32){$\emptyset$}

\end{picture}
\hspace*{25 mm}
\begin{picture}(10,30)(0,0)

\qbezier(5,0)(5,0)(0,10)
\qbezier(5,0)(5,0)(10,10)

\qbezier(0,10)(0,10)(5,20)
\qbezier(10,10)(10,10)(5,20)

\qbezier(5,20)(5,20)(5,30)

\put(5,0){\circle*{1}}
\put(0,10){\circle*{1}}
\put(10,10){\circle*{1}}
\put(5,20){\circle*{1}}
\put(5,30){\circle*{1}}

\put(3,-4){$M$}
\put(-5,9){$C_{1}$}
\put(11,9){$C_{2}$}
\put(0,20){$S^{0}$}
\put(4,32){$\emptyset$}

\end{picture}
\end{center}
\caption{The two possible intersection posets
for the arrangement in
Example~\ref{example_two_possibilities}.}
\label{figure_two_possibilities}
\end{figure}

We now introduce the notion of intersection poset.
Depending on particular circumstances
there could be several suitable intersection posets
for a given arrangement. 
\begin{example}
{\rm
Let $M$ be the sphere $x^{2} + y^{2} + z^{2} = 1$.
Let $\{C_{1},C_{2}\}$
be the arrangement
consisting of the two circles
$x^{2} + y^{2} = 1$, $z=0$;
and
$x=0$, $y^{2} + z^{2} = 1$,
which intersect in two points
$p_{1}$ and~$p_{2}$.
We can either view these two points
as separate elements in an intersection poset
or as one zero-dimensional sphere $S^{0}$.
We will see that both views are useful. The
two possible intersection posets are displayed in
Figure~\ref{figure_two_possibilities}.
}
\label{example_two_possibilities}
\end{example}

\begin{definition}
An intersection poset $P$ of a manifold arrangement
$\{N_{i}\}_{i=1}^{m}$ of a compact manifold~$M$
is a poset whose elements are ordered by reverse inclusion
that satisfies:
\begin{itemize}
\item[(i)]
The empty set is an element of $P$.

\item[(ii)]
Each non-empty element of $P$ is a disjoint union
of connected components, all of the same dimension,
of a non-empty intersection 
$\bigcap_{i \in I} N_{i}$
where $I \subseteq \{1,\ldots,m\}$.

\item[(iii)]
Given a connected component $C$ of 
an intersection $\bigcap_{i \in I} N_{i}$,
there exists exactly one element of $P$
having $C$ as one of its connected components.
\end{itemize}
Conditions (ii) and (iii) imply that
each intersection can be written uniquely as a disjoint union
of non-empty elements of $P$.
\begin{itemize}
\item[(iv)]
Let $I \subseteq J$ be two index sets.
Then we have unique subsets $A$ and $B$ of $P - \{\emptyset\}$
such that
$$   \bigsqcup_{x \in A} x
     =
       \bigcap_{i \in I} N_{i}
     \supseteq
       \bigcap_{j \in J} N_{j}
     =
     \bigsqcup_{y \in B} y  .  $$
If $x \in A$ and $y \in B$ intersect non-trivially
then the element $x$ contains the element $y$,
that is,
$$   x \cap y \neq \emptyset
  \Longrightarrow
       x \supseteq y    .   $$
\end{itemize}
\end{definition}
The condition that the elements of $P$ consist of
manifolds all of the same dimension ensures that
the dimension of a non-empty element of $P$ is well-defined.
Also note that condition $(iv)$ mimics the condition of the frontier
for stratified spaces;
see equation~\eqref{equation_condition_of_the_frontier}.

\begin{example}
{\rm
Let $M$ be a compact manifold of dimension greater than one
and let $N_{1}$ and $N_{2}$ be two one-dimensional submanifolds
of $M$, that is, $N_{1}$ and $N_{2}$ are closed curves.
Assume that 
$N_{1}$ and $N_{2}$
intersect in $k$ points.
Then the number of possible intersection posets
of the manifold arrangement $\{N_{1},N_{2}\}$
is given by the $k$th Bell number, that is, the number of
set partitions of a $k$-element set.
}
\end{example}

As an example of an intersection poset, we may take
the elements to consist of all connected components of the
non-empty intersections. This is the approach taken
in the paper~\cite{Ehrenborg_Readdy_Slone} when
studying toric arrangements. However, this does not
work for spherical arrangements since the zero-dimensional
sphere consists of two points and hence is disconnected.
See Section~\ref{section_sphere_torus} for
further discussion regarding these two special cases.

\begin{example}
{\rm
Let $\{N_{i}\}_{i=1}^{m}$ be a manifold arrangement
of a manifold $M$
such that each intersection
$\bigcap_{i \in I} N_{i}$
is {\em pure}, that is,
each component of 
$\bigcap_{i \in I} N_{i}$
has the same dimension.
Then as an intersection poset
we may choose
$$   L
    =
       \left\{ \bigcap_{i \in I} N_{i}
            \:\: : \:\:
                I \subseteq \{1, \ldots, m\} \right\}
     \cup
       \{ \emptyset \}   . $$
Observe that this intersection poset is indeed a lattice
and hence it is called the {\em intersection lattice} of
the arrangement.
However, as 
Example~\ref{example_different_dimensions}
shows there are arrangements which do not have
an intersection lattice.
}
\end{example}

Finally, we define a {\em quasi-graded intersection poset}
$(P,\rho,\zetabar)$
of a manifold arrangement $\{N_{i}\}_{i=1}^{m}$ of a manifold $M$
to consist of
(i) an intersection poset $P$ of the arrangement,
(ii) the rank function~$\rho$ given by
$\rho(x) = \dim(M) - \dim(x)$
and
$\rho(\emptyset) = \dim(M) + 1$,
and
(iii) the weighted zeta function $\zetabar$
given by the classical zeta function $\zeta$.

\section{The complement of a manifold arrangement}
\label{section_complement}

We define the {\em manifold Zaslavsky invariant}
of a quasi-graded poset $(P,\rho,\zetabar)$
with respect to a function~$f$ defined on~$P$
to be
$$   Z_{M}(P,\rho,\zetabar; f)
     =
       \sum_{\hz \leq x \leq \ho}
                  (-1)^{\rho(x)}
               \cdot
                   \mubar(\hz,x)
               \cdot
                   \zetabar(x,\ho)
               \cdot
                   f(x)   .     $$
In our applications the elements of the poset
will be geometric objects and
the function $f$ will be the Euler characteristic $\chi$.
In the case when $f$ is integer-valued
the manifold Zaslavsky invariant~$Z_{M}$ is an integer.

\begin{theorem}
Let $\{N_{i}\}_{i=1}^{m}$ be a manifold arrangement
in a manifold $M$ with
quasi-graded intersection poset~$(P,\rho,\zeta)$,
and where $\chi$ is the Euler characteristic
of the elements in the intersection poset.
Then the Euler characteristic of the complement
is given by
$$  \chi\left(
          M - \bigcup_{i=1}^{m} N_{i}
            \right)
 =
Z_{M}(P, \rho, \zeta; \chi)  .
$$
\label{theorem_complement}
\end{theorem}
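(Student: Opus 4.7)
The plan is to stratify the ambient manifold $M$ by locally closed pieces indexed by the non-empty elements of $P$, apply additivity of the Euler characteristic with compact support, perform M\"obius inversion on $P$, and finally translate back to the ordinary Euler characteristic via Proposition~\ref{proposition_sign}.

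For each $x \in P - \{\emptyset\}$ set
$$x^{\circ} \;=\; x \setminus \bigcup_{\substack{y \in P - \{\emptyset\} \\ y > x}} y.$$
The key geometric claim is that the sets $\{x^{\circ}\}$ partition $M$. For a point $p \in M$, let $I(p) = \{i : p \in N_{i}\}$ and let $x_{p} \in P$ be the unique element whose component structure contains the connected component of $\bigcap_{i \in I(p)} N_{i}$ through $p$; existence and uniqueness come from conditions (ii) and (iii) in the definition of an intersection poset. If $x \in P - \{\emptyset\}$ is another element with $p \in x$, then $x$ is a union of components of some $\bigcap_{j \in J} N_{j}$, and $p \in x$ forces $J \subseteq I(p)$. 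Condition (iv) applied to the inclusion $J \subseteq I(p)$ at the non-trivially intersecting pair $(x, x_{p})$ then yields $x \supseteq x_{p}$. Hence $p$ lies in $x_{p}^{\circ}$ and in no other stratum. Furthermore, each non-empty $x \in P$ is a finite disjoint union of compact submanifolds (Proposition~\ref{proposition_components}) and hence is closed in $M$, so every $x^{\circ}$ is locally closed. In particular, $\hz^{\circ} = M - \bigcup_{i=1}^{m} N_{i}$.

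Additivity of $\chi_{c}$ on locally closed sets, recalled in Section~\ref{section_preliminaries}, now gives for each $y \in P - \{\emptyset\}$
$$\chi_{c}(y) \;=\; \sum_{\substack{x \in P - \{\emptyset\} \\ x \geq y}} \chi_{c}(x^{\circ}).$$
M\"obius inversion on $P - \{\emptyset\}$ (whose M\"obius function on any interval $[\hz, y]$ with $y \neq \emptyset$ coincides with $\mubar$ on $P$), specialized at $x = \hz = M$, yields
$$\chi_{c}\left(M - \bigcup_{i=1}^{m} N_{i}\right) \;=\; \sum_{y \in P - \{\emptyset\}} \mubar(\hz, y) \, \chi_{c}(y).$$

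Finally I would invoke Proposition~\ref{proposition_sign} on both sides. For each non-empty $y \in P$ one has $\dim(y) = \dim(M) - \rho(y)$, so $\chi_{c}(y) = (-1)^{\dim(M) - \rho(y)} \chi(y)$; the complement is an open $\dim(M)$-manifold, so $\chi_{c}(M - \bigcup_{i} N_{i}) = (-1)^{\dim(M)} \chi(M - \bigcup_{i} N_{i})$. Substituting, cancelling $(-1)^{\dim(M)}$, using $\zeta(y, \ho) = 1$, and noting that the omitted $y = \emptyset$ term contributes zero since $\chi(\emptyset) = 0$, produces exactly $Z_{M}(P, \rho, \zeta; \chi)$. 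The principal obstacle is the first step: establishing that the strata $\{x^{\circ}\}$ truly partition $M$, which rests essentially on condition (iv) of the intersection poset definition---the combinatorial avatar of the frontier axiom for stratified spaces. Once the stratification is in hand, the rest is routine M\"obius inversion combined with the Poincar\'e-duality sign change between $\chi$ and $\chi_{c}$.
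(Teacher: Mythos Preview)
Your proof is correct and follows essentially the same strategy as the paper: define the strata $x^{\circ}$, use additivity of $\chi_{c}$ to get $\chi_{c}(x) = \sum_{y \geq x} \chi_{c}(y^{\circ})$, M\"obius invert, specialize to $\hz = M$, and convert $\chi_{c}$ to $\chi$ via Proposition~\ref{proposition_sign}. The paper asserts the disjoint-union decomposition $x = \bigsqcup_{y \geq x} y^{\circ}$ ``directly'' without argument; your careful verification of this partition using conditions~(ii)--(iv) of the intersection-poset definition is a welcome addition rather than a deviation.
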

\begin{proof}
For a manifold $x$ in the intersection poset $P$, define
$x^{\circ}$ by
\begin{equation}
   x^{\circ} = x - \bigcup_{y \subseteq x} y  = x - \bigcup_{y \geq x} y  , 
\label{equation_interior}
\end{equation}
that is, 
$x^{\circ}$ consists of all points in $x$ not contained
in any submanifold in $P$.
Observe that $x^{\circ}$ is a manifold
that is not necessarily compact.
Directly for all submanifolds $x$ in the 
intersection poset we have the following disjoint union:
$$    x   =   \bigcup_{x \leq y}^{\bullet} y^{\circ}  .  $$
Applying the Euler characteristic with compact support,
and using the fact $\chi_{c}$ is
additive on disjoint unions, we have
$$    \chi_{c}(x)   =   \sum_{x \leq y} \chi_{c}(y^{\circ})  .  $$
M\"obius inversion yields
$$    
     \chi_{c}(y^{\circ})   =   \sum_{y \leq x} \mu(y,x) \cdot \chi_{c}(x)  .  
$$
By setting $y$ to be the entire manifold $M$,
that is, the minimal element in the intersection
poset~$P$, using
Proposition~\ref{proposition_sign}
and observing that
$(-1)^{\dim(M)} = (-1)^{\rho(x)} \cdot (-1)^{\dim(x)}$,
the result follows.
\end{proof}

Theorem~\ref{theorem_complement}
is an extension of Zaslavsky's classical result on enumerating
the number of regions of a hyperplane arrangement~\cite{Zaslavsky_1}.
Before stating his result, 
we define the {\em Zaslavsky invariant}
of a quasi-graded poset $(P,\rho,\zetabar)$
to be
$$   Z(P,\rho,\zetabar)
     =
       \sum_{\hz \leq x \leq \ho}
                  (-1)^{\rho(x)}
               \cdot
                   \mubar(\hz,x)
               \cdot
                   \zetabar(x,\ho)   .  $$
See~\cite{Ehrenborg_Readdy_Slone} for the graded poset case.
Zaslavsky's immortal result can now be
stated as follows.
\begin{theorem}
Let $\{V_{i}\}_{i=1}^{m}$ be a hyperplane arrangement
in $\Rrr^{n}$ with intersection lattice $L$.
Then the number of chambers in the complement
of the hyperplane arrangement
is given by $Z(L,\rho,\zeta)$.
\label{theorem_Zaslavsky}
\end{theorem}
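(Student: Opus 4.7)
The plan is to re-run the proof of Theorem~\ref{theorem_complement} in the non-compact setting, with the ambient space $\Rrr^{n}$ in place of a compact manifold~$M$ and with the hyperplanes~$V_{i}$ playing the role of submanifolds. Compactness of the ambient space is never actually invoked in that proof: what is needed is finiteness of the intersection lattice (automatic for a finite hyperplane arrangement), additivity of $\chi_{c}$ on locally closed sets, and Proposition~\ref{proposition_sign}, which already applies to non-compact manifolds.

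For each flat $x \in L$, define $x^{\circ} = x - \bigcup_{y > x} y$, so that one obtains the disjoint union $x = \bigsqcup_{x \leq y} y^{\circ}$. Applying $\chi_{c}$ and using its additivity yields $\chi_{c}(x) = \sum_{x \leq y} \chi_{c}(y^{\circ})$, which M\"obius-inverts to
\begin{equation*}
\chi_{c}(y^{\circ}) \: = \: \sum_{y \leq x} \mu(y,x) \cdot \chi_{c}(x).
\end{equation*}
Specializing to $y = \hz = \Rrr^{n}$ then expresses $\chi_{c}(\Rrr^{n} - \bigcup_{i} V_{i})$ as $\sum_{x \in L} \mu(\hz,x) \cdot \chi_{c}(x)$.

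Next I would evaluate both Euler characteristics in sight. Since each flat $x$ is an affine subspace of dimension $n - \rho(x)$, hence contractible, Proposition~\ref{proposition_sign} gives $\chi_{c}(x) = (-1)^{n - \rho(x)}$. On the other hand, each chamber of the complement is a non-empty open convex subset of $\Rrr^{n}$, hence homeomorphic to $\Rrr^{n}$, so has $\chi_{c}$ equal to $(-1)^{n}$. Writing $r$ for the number of chambers, $\chi_{c}(\Rrr^{n} - \bigcup_{i} V_{i}) = (-1)^{n} \cdot r$. Equating the two expressions and multiplying through by $(-1)^{n}$ yields
\begin{equation*}
r \: = \: \sum_{x \in L} (-1)^{\rho(x)} \cdot \mu(\hz,x) \: = \: Z(L,\rho,\zeta),
\end{equation*}
since $\zeta(x,\ho) = 1$ throughout.

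The one genuine point requiring attention is the identification $\chi_{c}(C) = (-1)^{n}$ for each chamber $C$, which rests on the convexity of chambers rather than on anything more subtle. Tempting though it might be, no compactification of $\Rrr^{n}$ is needed; indeed, a naive attempt to reduce directly to Theorem~\ref{theorem_complement} via the one-point compactification $S^{n}$ would force all parallel hyperplanes to meet tangentially at infinity, in general violating the clean intersection property.
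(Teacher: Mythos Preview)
The paper does not actually supply its own proof of Theorem~\ref{theorem_Zaslavsky}: it is stated as Zaslavsky's classical result with a citation to~\cite{Zaslavsky_1}, sandwiched between Theorem~\ref{theorem_complement} and its subspace-arrangement generalization (Theorem~\ref{theorem_sphere}). So there is no paper proof to compare against directly.

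That said, your argument is correct and is precisely in the spirit the paper intends: you specialize the M\"obius-inversion-with-$\chi_{c}$ argument of Theorem~\ref{theorem_complement} to the affine situation, correctly noting that compactness played no role beyond guaranteeing finiteness of the intersection data (automatic here), and then exploit the two facts specific to hyperplane arrangements---that every flat is affine (hence $\chi_{c}(x) = (-1)^{n-\rho(x)}$) and every chamber is open convex (hence $\chi_{c}(C) = (-1)^{n}$)---to turn the Euler-characteristic identity into a pure count. Your closing remark about the one-point compactification is also apt: the paper's own route to the spherical case (Theorem~\ref{theorem_sphere}) goes through genuine spherical arrangements rather than compactified affine ones, for exactly the transversality reason you identify.
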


For our purposes we need to 
extend Theorem~\ref{theorem_Zaslavsky}
to subspace arrangements:
\begin{theorem}
Let $\{V_{i}\}_{i=1}^{m}$ be a subspace arrangement in $\Rrr^{n}$
with quasi-graded intersection poset $(P,\rho,\zeta)$
and let $S^{n-1}$ be an $(n-1)$-dimensional sphere centered at the origin.
Then the Euler characteristic of
the complement of the arrangement in the sphere $S^{n-1}$ is given by
$$
\chi\left( S^{n-1} - \bigcup_{i=1}^{m} V_{i} \right)
     =
Z(P, \rho,\zeta) . $$
\label{theorem_sphere}
\end{theorem}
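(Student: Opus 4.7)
The plan is to apply Theorem~\ref{theorem_complement} to the induced manifold arrangement $\{V_i \cap S^{n-1}\}_{i=1}^m$ on the compact manifold $S^{n-1}$, and then simplify the resulting expression to $Z(P,\rho,\zeta)$. First, I would verify that this collection is a manifold arrangement: near any $p \in S^{n-1}$, the exponential map identifies a neighborhood of $p$ with a neighborhood of $0$ in $T_p S^{n-1} \cong \mathbb{R}^{n-1}$, under which each sphere $V_i \cap S^{n-1}$ with $p \in V_i$ corresponds to the linear subspace $V_i \cap T_p S^{n-1}$ (since great subspheres are totally geodesic in $S^{n-1}$), while spheres with $p \notin V_i$ can be avoided on a small enough neighborhood. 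Thus Bott's clean intersection property holds, with local model a subspace arrangement in $\mathbb{R}^{n-1}$.

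Next, I would describe a natural quasi-graded intersection poset $P'$ for the sphere arrangement. Its non-empty elements are the spheres $V \cap S^{n-1}$ as $V$ ranges over the elements of $P$ with $\dim(V) \geq 1$, the map $V \mapsto V \cap S^{n-1}$ being a rank-preserving order isomorphism onto its image (since $(n-1) - (\dim(V)-1) = n - \dim(V) = \rho(V)$). The required top element $\emptyset$ of $P'$ either coincides with the image of $\{0\} \in P$ (in the case $\ho = \{0\}$ in $P$) or is adjoined as an additional top element otherwise. In either case the $\emptyset$-term contributes $0$ to $Z_M(P',\rho,\zeta;\chi)$, since $\chi(\emptyset) = 0$.

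Applying Theorem~\ref{theorem_complement} then yields
$$\chi\left(S^{n-1} - \bigcup_{i=1}^{m} V_i\right) = \sum_{V \in P} (-1)^{\rho(V)} \cdot \mu(\hz, V) \cdot \chi(V \cap S^{n-1}),$$
with the convention $\chi(\{0\} \cap S^{n-1}) = \chi(\emptyset) = 0$. Since $V \cap S^{n-1}$ is a sphere of dimension $\dim(V) - 1$, its Euler characteristic equals $1 + (-1)^{\dim(V) - 1}$, a value consistent with $0$ at $V = \{0\}$. Substituting this and using $\rho(V) + \dim(V) = n$, the right-hand side splits as
$$Z(P,\rho,\zeta) + (-1)^{n-1} \sum_{V \in P} \mu(\hz, V),$$
and the second summand vanishes by the standard M\"obius identity $\sum_V \mu(\hz, V) = 0$, valid in any non-degenerate arrangement $(\hz \neq \ho)$.

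The main obstacle is the poset bookkeeping: when $\ho \neq \{0\}$ in $P$, the poset $P'$ acquires one extra top element $\emptyset$ beyond $P$, and one must verify (via $\chi(\emptyset) = 0$ together with the M\"obius identity) that this does not affect the computation. Everything else reduces to a direct substitution of the spherical Euler characteristic and a single invocation of M\"obius inversion.
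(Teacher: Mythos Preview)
Your proof is correct and follows essentially the same route as the paper: apply Theorem~\ref{theorem_complement} to the spherical arrangement $\{V_i \cap S^{n-1}\}$, substitute the Euler characteristic $1 + (-1)^{\dim(V)-1}$ of each sphere, and kill the extra term via $\sum_V \mu(\hz,V) = 0$. You are simply more careful than the paper about two points it glosses over---verifying the clean intersection property explicitly, and tracking the top element $\emptyset$ when $\ho_P \neq \{0\}$---but these are refinements of the same argument, not a different one.
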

\begin{proof}
Observe that $\{S^{n-1} \cap V_{i}\}_{i=1}^{m}$ is
a spherical arrangement on the sphere $S^{n-1}$.
Furthermore, the subspace arrangement and the
spherical arrangement have the same
intersection poset $P$.
Hence by Theorem~\ref{theorem_complement}
the Euler characteristic of the complement is given by
\begin{align*}
\chi\left( S^{n-1} - \bigcup_{i=1}^{m} V_{i} \right)
& = 
       \sum_{x \in P}
            (-1)^{\rho(x)}
                 \cdot
            \mu(\hz,x)
                 \cdot
            \chi(x)   \\
& = 
       \sum_{x \in P}
            (-1)^{\rho(x)}
                 \cdot
            \mu(\hz,x)
                 \cdot
            \left((-1)^{\dim(x)} + 1\right)  \\
& = 
       \sum_{x \in P}
       \left(
            (-1)^{n}
          +
            (-1)^{\rho(x)}
        \right)
                 \cdot
            \mu(\hz,x) \\
& = 
       \sum_{x \in P}
            (-1)^{\rho(x)}
                 \cdot
            \mu(\hz,x)  ,
\end{align*}
where in the third step we use
$\rho(x) + \dim(x) = n$ and
in the fourth step we use
$\sum_{x \in P} \mu(\hz,x) = 0$.
\end{proof}

We should be mindful that
$Z(P,\rho,\zetabar)$ only depends on the quasi-graded poset structure,
whereas $Z_{M}(P,\rho,\zetabar;f)$ also depends on the
function values $f(x)$ for elements $x$ in $P$.

\section{The $\cd$-index and Whitney stratifications}
\label{section_cd}

In this section we review the theory of the $\cd$-index
for Eulerian quasi-graded posets
and the important subclass of 
face posets of manifolds which have Whitney stratified
boundaries.
For more details, see the
article~\cite{Ehrenborg_Goresky_Readdy}.

Let $\av$ and $\bv$ be two non-commutative variables.
Given a quasi-graded poset $(P,\rho,\zetabar)$,
the {\em weight} of a chain
$c = \{\hz = x_{0} < x_{1} < \cdots < x_{k} = \ho\}$
is
\begin{equation}
\wt(c)
=
(\av-\bv)^{\rho(x_{0},x_{1})-1}
\cdot
\bv     
\cdot
(\av-\bv)^{\rho(x_{1},x_{2})-1}
\cdot
\bv     
\cdots
\bv     
\cdot
(\av-\bv)^{\rho(x_{k-1},x_{k})-1}  ,
\label{equation_weight}
\end{equation}
where the $\rho(x,y)$ denotes the difference $\rho(y) - \rho(x)$.
Note that the weight of a chain is an $\ab$-polynomial
with integer coefficients homogeneous
of degree $\rho(\hz,\ho) - 1 = \rho(P) - 1$.
Furthermore,
the {\em weighted zeta function} of the chain $c$ is the product
$$   \zetabar(c)
=
\zetabar(x_{0},x_{1})
\cdot
\zetabar(x_{1},x_{2})
\cdots
\zetabar(x_{k-1},x_{k})  .  $$
The {\em $\ab$-index} of the quasi-graded poset $(P,\rho,\zetabar)$
is defined to be
$$   \Psi(P,\rho,\zetabar)
    =
        \sum_{c} \zetabar(c) \cdot \wt(c)   ,  $$
where the sum ranges over all chains $c$
in the quasi-graded poset $P$.
Similarly, the $\ab$-index of
a quasi-graded poset $(P,\rho,\zetabar)$
is an $\ab$-polynomial 
homogeneous of degree $\rho(P) - 1$
with integer coefficients.

\begin{remark}
{\rm
An alternative definition of the $\ab$-index of a quasi-graded
poset of rank $n+1$ is to define
the flag $\fbar$-vector by
$\fbar(S)
=
\sum_{c}
\zetabar(x_{0},x_{1})
\cdot
\zetabar(x_{1},x_{2})
\cdots
\zetabar(x_{k-1},x_{k})$,
where the sum is over all chains
$c = \{\hz = x_{0} < x_{1} < \cdots < x_{k} = \ho\}$
satisfying $S = \{\rho(x_{1}), \ldots, \rho(x_{k-1})\}$.
The flag $\hhbar$-vector is then given by
$\hhbar(S) = \sum_{T \subseteq S} (-1)^{|S-T|} \cdot \fbar(T)$.
Finally, the $\ab$-index is the sum
$\Psi(P,\rho,\zetabar)
=
\sum_{S} \hhbar(S) \cdot u_{S}$, where
the monomial $u_{S} = u_{1} u_{2} \cdots u_{n}$
is given by $u_{i} = \av$ if $i \not\in S$ and
$u_{i} = \bv$ if $i \in S$.
}
\end{remark}

In~\cite[Section~3]{Ehrenborg_Goresky_Readdy}
the definition of Eulerian poset is extended
to quasi-graded posets.
A quasi-graded poset $(P,\rho,\zetabar)$
is {\em Eulerian} if for all elements $x < z$ in the poset $P$
the following equality holds:
\begin{equation}
  \sum_{x \leq y \leq z}
            (-1)^{\rho(x,y)} \cdot \zetabar(x,y) \cdot \zetabar(y,z)
      = 0 .  
\label{equation_Eulerian}
\end{equation}
From~\cite[Theorem~4.2]{Ehrenborg_Goresky_Readdy}
we have that
\begin{theorem}[Ehrenborg--Goresky--Readdy]
The $\ab$-index of an Eulerian quasi-graded poset $(P,\rho,\zetabar)$
can be written in terms of $\cv = \av+\bv$ and
$\dv = \av \cdot \bv + \bv \cdot \av$.
\label{theorem_cd}
\end{theorem}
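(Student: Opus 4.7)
The strategy is to verify the generalized Dehn--Sommerville relations for the flag $\hhbar$-vector of an Eulerian quasi-graded poset. By the work of Bayer--Billera and Bayer--Klapper~\cite{Bayer_Billera,Bayer_Klapper} (see also Stanley~\cite{Stanley}), an $\ab$-polynomial lies in $\Zzz\langle\cv,\dv\rangle$ if and only if its coefficient vector $\hhbar$ satisfies these relations, and the solution space has dimension equal to the number of $\cv\dv$-monomials of the given degree. Using the description of $\Psi$ in terms of $\fbar$ and $\hhbar$ from the preceding remark, the theorem reduces to establishing the Dehn--Sommerville relations for the quasi-graded poset $(P,\rho,\zetabar)$.

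\textbf{Execution.} I would fix a rank set $S \subseteq \{1,\ldots,n\}$ containing a ``gap'' $[i,k] \setminus S$ with $k - i \ge 2$ and expand
$$\fbar(T) = \sum_{c} \zetabar(x_0,x_1)\cdot\zetabar(x_1,x_2)\cdots\zetabar(x_{\ell-1},x_{\ell})$$
for each $T$ obtained from $S$ by inserting ranks in this gap. Group chains by the portion lying outside the gap; the internal portion then ranges over chains in a single interval $[x_{j-1},x_j]$. Since the weighted zeta factors outside that interval are independent of the insertion, they factor out, and the alternating combination of $\fbar$-values predicted by Dehn--Sommerville collapses, on each outer chain, to a signed sum of the form
$$\sum_{x_{j-1}\le y\le x_j}(-1)^{\rho(x_{j-1},y)}\,\zetabar(x_{j-1},y)\,\zetabar(y,x_j),$$
which vanishes by the Eulerian condition~\eqref{equation_Eulerian}. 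Standard inclusion--exclusion transfers the resulting identity from $\fbar$ to $\hhbar$, yielding the required Dehn--Sommerville relation.

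\textbf{Main obstacle.} The principal technical challenge is selecting the correct signed combination of $\fbar$-values so that Dehn--Sommerville on $\hhbar$ reduces to a single invocation of~\eqref{equation_Eulerian} per outer chain. In the classical graded Eulerian setting this combinatorial bookkeeping is handled in Bayer--Billera~\cite{Bayer_Billera}; the quasi-graded generalization requires confirming that the general integer-valued factors $\zetabar(x_{t-1},x_t)$ for $t\neq j$ behave as a common scalar that pulls cleanly through the manipulation, rather than being absorbed by the Eulerian cancellation. Once this verification is in place, identification with $\Psi(P,\rho,\zetabar)\in\Zzz\langle\cv,\dv\rangle$ follows from the algebraic characterization cited above.
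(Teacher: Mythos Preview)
The paper does not supply a proof of this theorem; it is quoted verbatim from~\cite[Theorem~4.2]{Ehrenborg_Goresky_Readdy} and then used as a black box. So there is no ``paper's own proof'' to compare against.

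That said, your plan is the standard and correct one. The key computation you sketch---fixing an outer chain with a gap $[i,k]$, inserting an intermediate element $y$ of each rank $j$ with $i<j<k$, and invoking~\eqref{equation_Eulerian} on the interval $[x_{j-1},x_j]$---does go through in the quasi-graded setting. The point you flag as the ``main obstacle'' is genuine but mild: since the Eulerian relation~\eqref{equation_Eulerian} sums over \emph{all} $y$ in the interval (grouped by $\rho(y)$) rather than over elements of a single rank, the alternating combination $\sum_{j=i+1}^{k-1}(-1)^{j-i}\fbar(S\cup\{j\})$ together with the two boundary terms $j=i$ and $j=k$ reproduces exactly the left-hand side of~\eqref{equation_Eulerian} after factoring out the outer $\zetabar$-products. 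This yields the Bayer--Billera relations for $\fbar$, hence for $\hhbar$, and the Bayer--Klapper characterization then places $\Psi$ in $\Zzz\langle\cv,\dv\rangle$. Your proposal is essentially how the cited reference argues as well.
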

When the $\ab$-index is expressed in terms of
$\cv$ and $\dv$, we call it the {\em $\cd$-index}.
Also note that the variable $\cv$ has degree $1$ and 
$\dv$ has degree $2$. Like the $\ab$-index,
the $\cd$-index satisfies $\Psi(P,\rho,\zetabar) \in \zcd$.

Observe that when $P$ is a graded poset
and the weighted zeta function $\zetabar$
is the classical zeta function~$\zeta$,
condition~\eqref{equation_Eulerian}
reduces to the classical notion of
an Eulerian poset.
See~\cite[Section~3]{Ehrenborg_Goresky_Readdy}
for a detailed discussion.
Furthermore,
Theorem~\ref{theorem_cd} reduces to the
usual notion of the
$\cd$-index~\cite{Bayer_Klapper,Stanley}.

Define the involution $u \longmapsto u^{*}$
on $\ab$-polynomials
by reading each monomial in reverse, that is,
$(u_{1} u_{2} \cdots u_{n})^{*} = u_{n} \cdots u_{2} u_{1}$
where each $u_{i}$ is either $\av$ or $\bv$.
Observe that this involution restricts to $\cd$-polynomials
as well, since $\cv^{*} = \cv$
and $\dv^{*} = \dv$. Define the dual of
quasi-graded poset $(P,\rho,\zetabar)$ to be
$(P^{*},\rho^{*},\zetabar^{*})$ where $P^{*}$ is the dual
poset, that is, $x \leq_{P^{*}} y$ if $y \leq_{P} x$,
$\rho^{*}(x) = \rho(x,\ho)$ and 
$\zetabar^{*}(x,y) = \zetabar(y,x)$.
Directly it follows
$$   \Psi(P^{*},\rho^{*},\zetabar^{*})
 =
\Psi(P,\rho,\zetabar)^{*} . $$

We now review the notion of a
Whitney stratification.
\begin{definition}
A stratification $\Omega$ of a manifold $M$ is 
a disjoint union of smaller manifolds, called strata,
whose union is $M$.
We assume that the strata 
satisfy the {\em condition of the frontier,} that is,
for two strata $X$ and $Y$ in $\Omega$, we have
\begin{equation}
  X \cap \overline{Y} \neq \emptyset
\Longleftrightarrow
  X  \subseteq \overline{Y}   .
\label{equation_condition_of_the_frontier}
\end{equation}
This condition defines a partial order
on the set of strata, that is, it defines the face poset~$P$
of the stratification by
$X  \leq_{P} Y$ if $X  \subseteq \overline{Y}$.
Furthermore, for the stratification $\Omega$
to be a {\em Whitney stratification}, 
each strata
has to be a (locally closed, not necessarily connected)
smooth submanifold of $M$
and $\Omega$ must satisfy
Whitney's conditions $(A)$ and $(B)$:
\begin{quote}
Let $X <_{\mathcal P} Y$
and suppose $y_{i} \in Y$ is a sequence of points converging to some
$x \in X$ and that $x_{i} \in X$ converges to $x$.
Also assume that (with respect to some local
coordinate system on the manifold $M$) the secant lines
$\ell_{i} = \overline{x_{i} y_{i}}$ converge to some
limiting line~$\ell$ and the tangent planes
$T_{y_{i}} Y$ converge to some limiting plane
$\tau$.
Then the following inclusions hold:
\end{quote}
\begin{equation}
    \text{ (A) } \: T_{x} X \subseteq \tau
\:\:\:\:\:\:\:\:
    \text{  and  }
\:\:\:\:\:\:\:\:
    \text{ (B) } \: \ell \subseteq \tau.
\end{equation}
\end{definition}
We refer the reader
to~\cite{du_Plessis_Wall,Gibson_Wirthmuller_du_Plessis_Looijenga,
Goresky_MacPherson,Mather} for a more detailed discussion.
Note that we allow our strata to be disconnected.
This will be essential in Section~\ref{section_induced}.

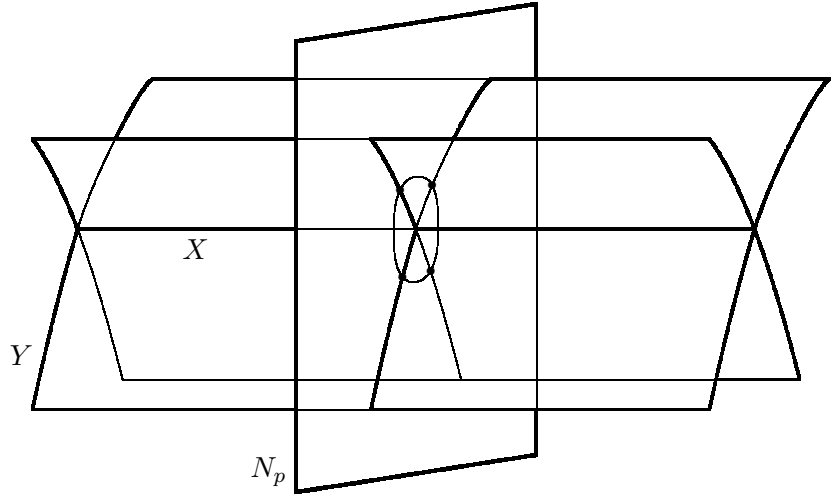
\begin{figure}
\setlength{\unitlength}{1 mm}
\begin{center}
\begin{picture}(106,65)(0,5)

\linethickness{0.30mm}\qbezier(0,16)(3,30)(6,40)
\linethickness{0.10mm}\qbezier(6,40)(7.5,45)(11,52)
\linethickness{0.30mm}\qbezier(11,52)(14.5,59)(16,60)
\linethickness{0.10mm}\qbezier(12,20)(9,32)(6,40)
\linethickness{0.30mm}\qbezier(6,40)(3,48)(0,52)

\linethickness{0.30mm}\qbezier(45,16)(48,30)(51,40)
\linethickness{0.10mm}\qbezier(51,40)(52.5,45)(56,52)
\linethickness{0.30mm}\qbezier(56,52)(59.5,59)(61,60)
\linethickness{0.10mm}\qbezier(57,20)(54,32)(51,40)
\linethickness{0.30mm}\qbezier(51,40)(48,48)(45,52)

\linethickness{0.30mm}\qbezier(90,16)(93,30)(96,40)
\linethickness{0.30mm}\qbezier(96,40)(97.5,45)(101,52)
\linethickness{0.30mm}\qbezier(101,52)(104.5,59)(106,60)
\linethickness{0.30mm}\qbezier(102,20)(99,32)(96,40)
\linethickness{0.30mm}\qbezier(96,40)(93,48)(90,52)

\linethickness{0.40mm}
\qbezier(35,5)(35,35)(35,65)
\qbezier(35,5)(35,5)(67,10)
\qbezier(35,65)(35,65)(67,70)
\qbezier(67,10)(67,10)(67,16)
\qbezier(67,60)(67,60)(67,70)
\linethickness{0.10mm}
\qbezier(67,16)(67,38)(67,60)

\linethickness{0.30mm}
\qbezier(0,16)(17.5,16)(35,16)
\qbezier(6,40)(20.5,40)(35,40)
\qbezier(0,52)(17.5,52)(35,52)
\qbezier(16,60)(25.5,60)(35,60)

\linethickness{0.10mm}
\qbezier(12,20)(23.5,20)(35,20)

\linethickness{0.10mm}
\qbezier(35,16)(40,16)(45,16)
\qbezier(35,40)(43,40)(51,40)
\qbezier(35,52)(91.5,52)(56,52)
\qbezier(35,60)(48,60)(61,60)
\qbezier(35,20)(46,20)(57,20)

\linethickness{0.30mm}
\qbezier(45,16)(67.5,16)(90,16)
\qbezier(51,40)(73.5,40)(96,40)
\qbezier(61,60)(83.5,60)(106,60)
\qbezier(45,52)(67.5,52)(90,52)

\qbezier(91,20)(95.5,20)(102,20)

\linethickness{0.10mm}
\qbezier(57,20)(57,20)(91,20)

\put(41,50)
{\begin{picture}(20,20)(-10,10)
\linethickness{0.10mm}
\qbezier(3,0.46875)(3,7.46875)(0,7)
\qbezier(-3,-0.46875)(-3,6.53125)(0,7)
\qbezier(-3,-0.46875)(-3,-7.46875)(0,-7)
\qbezier(3,0.46875)(3,-6.53125)(0,-7)

\newcommand{\linkdot}{\circle*{1.0}}
\put(2.15,5.85)\linkdot
\put(-2.1,5.2)\linkdot
\put(-1.78,-6.3)\linkdot
\put(1.95,-5.6)\linkdot
\end{picture}}

\put(20,36){$X$}
\put(29,7){$N_{p}$}
\put(-3,22){$Y$}

\end{picture}
\end{center}
\caption{The link of the horizontal line $X$
in the two-dimensional strata $Y$
consists of $4$ points.}
\label{figure_two}
\end{figure}

One important consequence of a Whitney stratification
is that the link of a strata in another strata is well-defined.
Let $X$ be a $k$-dimensional strata and $p$ a point in $X$.
Let $Y$ be another strata such that $X \leq Y$.
Let $N_{p}$ be a normal slice at $p$ to $X$,
that is, a submanifold
such that
$\dim(X) + \dim(N_{p}) = \dim(M)$
and
$X \cap N_{p} = \{p\}$.
Let $B_{\epsilon}(p)$ be a small ball centered at $x$
of radius~$\epsilon > 0$.
Then the homeomorphism type of the intersection
\begin{equation}
     Y \cap N_{p} \cap \partial B_{\epsilon}(p)   
\label{equation_link}
\end{equation}
does not depend on the choice of the point $p$ in $X$,
the choice of the normal slice $N_{p}$ or the choice
of the radius of the ball $B_{\epsilon}(p)$
for small enough $\epsilon > 0$.
The above intersection~\eqref{equation_link} is defined to be the
{\em link of $X$ in $Y$}, denoted by $\link_{Y}(X)$.
For details, see~\cite[Section~6]{Ehrenborg_Goresky_Readdy}.
As an example see Figure~\ref{figure_two},
where $X$ is a one-dimensional strata
and $Y$ is the two-dimensional strata
consisting of $4$~sheets attached to $X$.

Ehrenborg, Goresky and Readdy provided
a geometric source of Eulerian quasi-graded posets,
namely, those arising from Whitney stratified manifolds.
See~\cite[Theorem~6.10]{Ehrenborg_Goresky_Readdy}.
\begin{theorem}[Ehrenborg--Goresky--Readdy]
Let $M$ be a manifold whose boundary has a Whitney
stratification. Let $T$ be the face poset of this stratification
where the partial order relation is given
by~\eqref{equation_condition_of_the_frontier}.
Let the rank function $\rho$ and the weighted zeta function $\zetabar$ be
$$   \rho(x)
=
\begin{cases}
\dim(x) + 1 & \text{ if } x > \hz, \\
0                & \text{ if } x = \hz,
\end{cases}
\:\:\:\: \:\:\:\:
   \zetabar(x,y)
=
\begin{cases}
\chi(\link_{y}(x))
& \text{ if } x > \hz, \\
\chi(y)
& \text{ if } x = \hz.
\end{cases}
$$
Then the quasi-graded face poset
$(T,\rho,\zetabar)$ is Eulerian.
\label{theorem_Whitney_Eulerian}
\end{theorem}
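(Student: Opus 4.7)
The plan is to verify the Eulerian identity $\sum_{x \leq y \leq z}(-1)^{\rho(x,y)}\zetabar(x,y)\zetabar(y,z)=0$ for every pair $x<z$ in $T$, by interpreting each weighted zeta value topologically and collapsing the sum via additivity of the Euler characteristic with compact support together with M\"obius inversion. I would first dispose of the boundary cases $x=\hz$ and $z=\ho$ separately, reducing each by additivity of $\chi_{c}$ on the Whitney stratified closure $\overline{y}$ to the generic case $\hz<x<z<\ho$, in which $\zetabar$ is always a link Euler characteristic.

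For the generic case the geometric input is that $L:=\link_{z}(x)$ is itself a compact Whitney stratified space of dimension $d=\dim(z)-\dim(x)-1$, whose non-empty strata are indexed exactly by the elements $y$ with $x<y\leq z$, and whose own internal links agree with those inherited from $T$, that is, $\link_{L}(y)=\link_{z}(y)$ for each such $y$. Using the local product structure guaranteed by the Whitney conditions (A) and (B), a normal slice at a point of $x$ meets $z$ in an open cone over $L$, and the open stratum of $L$ corresponding to $y$ factors, up to Euler characteristic, as a product of $\link_{y}(x)$ with the open part of $\link_{z}(y)$. This yields the product formula
\[
\chi_{c}(L_{y}^{\circ}) \;=\; \chi(\link_{y}(x))\cdot \chi_{c}(\link_{z}(y)^{\circ}),
\]
where $L_{y}^{\circ}$ denotes the open stratum of $L$ at $y$ and $\link_{z}(y)^{\circ}$ is $\link_{z}(y)$ with all its own proper substrata removed.

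Summing over $y$ and applying M\"obius inversion on the second factor to express $\chi_{c}(\link_{z}(y)^{\circ})$ as an alternating sum of link Euler characteristics $\zetabar(y,y')$ exhibits $\chi_{c}(L)$ as essentially the Eulerian sum we wish to evaluate. Finally, because $L$ arises as the intersection of $\overline{z}$ with the boundary of a small normal ball in the ambient manifold, it is a closed topological manifold of dimension $d$ with $\chi(L)=1+(-1)^{d}$; the sign correction of Proposition~\ref{proposition_sign} then forces the alternating sum to collapse to zero. The hardest step will be the rigorous justification of the product formula for $\chi_{c}(L_{y}^{\circ})$ together with the identification $\link_{L}(y)=\link_{z}(y)$, since these demand a careful use of the Whitney conditions to produce stratified local trivializations compatible with all the links involved; this is genuine stratification theory rather than combinatorics. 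Once these geometric facts are in place, the remaining argument is a routine manipulation with M\"obius inversion, additivity of $\chi_{c}$, and the sphere Euler characteristic identity.
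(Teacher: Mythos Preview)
This theorem is not proved in the present paper: it is quoted verbatim from~\cite[Theorem~6.10]{Ehrenborg_Goresky_Readdy}, with no argument given here, so there is no proof in this paper against which to compare your proposal.

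Assessing your sketch on its own merits, the overall strategy---reinterpret the Eulerian sum for $[x,z]$ as an Euler-characteristic computation on the stratified link of $x$ in $\overline{z}$ and exploit additivity of $\chi_{c}$---is in the right spirit, but the final step is wrong. You assert that $L:=\overline{z}\cap N_{p}\cap\partial B_{\epsilon}(p)$ is a closed topological manifold with $\chi(L)=1+(-1)^{d}$. In general the closure $\overline{z}$ of a stratum is not a manifold, and even when it is it typically has boundary, so $L$ is not a sphere. Already for the boundary of a $3$-simplex with $x$ a vertex and $z$ an incident open $2$-face, $\overline{z}$ is a closed triangle and $L$ is a closed arc, giving $\chi(L)=1\neq 1+(-1)^{1}=0$. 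The place where a genuine sphere appears is the full normal sphere $N_{p}\cap\partial B_{\epsilon}(p)$ in the ambient manifold, which is stratified by the links of \emph{all} strata $y>x$, not merely those with $y\leq z$; this yields the identity for $[x,\ho]$ but not directly for a proper interval $[x,z]$.

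Your product formula $\chi_{c}(L_{y}^{\circ})=\chi(\link_{y}(x))\cdot\chi_{c}(\link_{z}(y)^{\circ})$ is also not well posed. If $L_{y}$ denotes the stratum $\link_{y}(x)\subseteq L$, then $L_{y}$ is already locally closed and $\link_{z}(y)$ is already a manifold with no substrata to remove, so by Proposition~\ref{proposition_sign} your identity would force $\chi(\link_{z}(y))=\pm 1$ for every $y$, which is false (see Figure~\ref{figure_two}). What you actually need is the transitivity of links---that the link of the stratum $\link_{y}(x)$ inside the stratum $\link_{z}(x)$ of $L$ is homeomorphic to $\link_{z}(y)$---and then an induction on $\rho(x,z)$ that feeds the Eulerian relation for the smaller stratified space $L$ back into the relation for $[x,z]$. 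The argument in~\cite{Ehrenborg_Goresky_Readdy} proceeds along these lines; your sketch would need to replace the erroneous ``$L$ is a sphere'' conclusion by such an induction.
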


We end this section with a result
about stratifications.
\begin{proposition}
Let $M$ be a manifold with a Whitney stratification $\Omega$
in its boundary.
Assume that there are two strata $X$ and $Y$ of the same
dimension satisfying:
\begin{itemize}
\item[(i)]
for all strata $V$ in $\Omega$
the condition $X < V$ is equivalent to $Y < V$, and
\item[(ii)]
for all strata $V$ in $\Omega$
such that $X < V$,
the two links $\link_{V}(X)$ and $\link_{V}(Y)$ are homeomorphic.
\end{itemize}
Then
$$ \Omega^{\prime} = \Omega - \{X,Y\} \cup \{X \cup Y\} $$
is also a Whitney stratification and
their $\cd$-indexes are equal:
$$  \Psi(\Omega) = \Psi(\Omega^{\prime})  . $$
\label{proposition_X_Y}
\end{proposition}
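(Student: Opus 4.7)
The plan has two components: first verify that $\Omega'$ is still a Whitney stratification, then show that its $\ab$-index equals that of $\Omega$ by a chain-by-chain comparison of the defining sum. For the first component, the merged stratum $Z = X \cup Y$ is a disjoint union of two smooth submanifolds of common dimension, which is permitted since strata need not be connected. Whitney's conditions (A) and (B) are local, so at a point $p \in X \subseteq Z$ they reduce to the conditions for $\Omega$ at $p$, and similarly at a point of $Y$. The condition of the frontier~\eqref{equation_condition_of_the_frontier} is handled via $\overline{Z} = \overline{X} \cup \overline{Y}$: if $V \cap \overline{Z} \neq \emptyset$ then $V$ meets $\overline{X}$ or $\overline{Y}$ and is therefore contained in $\overline{Z}$; if $Z \cap \overline{V} \neq \emptyset$, condition (i) promotes ``one of $X$, $Y$ lies in $\overline{V}$'' to both, giving $Z \subseteq \overline{V}$.

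For the $\cd$-index, the face poset $T'$ of $\Omega'$ arises from the face poset $T$ of $\Omega$ by identifying $X$ and $Y$ into a single element $Z$. Since $\dim(X) = \dim(Y) = \dim(Z)$, the rank function is preserved, so the chain weight~\eqref{equation_weight} of a chain through $Z$ in $T'$ agrees with that of any chain through $X$ or $Y$ in $T$ obtained by undoing the identification. The key observation concerns the weighted zeta function on edges incident to $Z$. Condition (ii) implies that the link of $Z$ in an upper stratum $W > Z$ is well-defined as a homeomorphism type, with $\chi(\link_W(Z)) = \chi(\link_W(X)) = \chi(\link_W(Y))$. For an edge $V < Z$ in $T'$ with $V \neq \hz$, a small normal slice at $p \in V$ intersects $X$ and $Y$ in disjoint neighborhoods, so $\link_Z(V)$ is the disjoint union of $\link_X(V)$ and $\link_Y(V)$, where a summand is empty if $V$ is not contained in the closure of that stratum in $T$; additivity of the Euler characteristic yields $\zetabar(V,Z) = \zetabar(V,X) + \zetabar(V,Y)$. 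The edge $\hz < Z$ is handled identically: $\chi(Z) = \chi(X) + \chi(Y)$.

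The final step is to group chains of $T$ according to their image in $T'$. Chains in $T'$ avoiding $Z$ correspond bijectively to chains in $T$ avoiding both $X$ and $Y$, with identical $\zetabar(c)\cdot\wt(c)$ contribution. For a chain in $T'$ passing through $Z$ at position $V_{i-1} < Z < V_{i+1}$, expanding $\zetabar(V_{i-1},Z)\cdot\zetabar(Z,V_{i+1})$ via the two relations above produces
\begin{equation*}
\zetabar(V_{i-1},X)\,\zetabar(X,V_{i+1}) + \zetabar(V_{i-1},Y)\,\zetabar(Y,V_{i+1}),
\end{equation*}
a term vanishing if the corresponding edge is absent from $T$. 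Summing over all chains in both posets gives $\Psi(\Omega) = \Psi(\Omega')$. I expect the main obstacle to be the disjoint-union description of $\link_Z(V)$ for $V<Z$, which needs a careful local argument that a small enough normal slice at $p \in V$ meets $X$ and $Y$ in disjoint neighborhoods and hence the link splits as a disjoint union; the rest of the argument is essentially bookkeeping with additivity of $\chi$ together with hypotheses (i) and (ii).
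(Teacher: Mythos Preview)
Your proposal is correct and follows essentially the same route as the paper. The paper's own ``proof'' is a one-line citation of Lemma~5.4 in~\cite{Ehrenborg_Goresky_Readdy}, which is precisely the abstract quasi-graded poset statement that merging two elements with identical up-sets and matching outgoing $\zetabar$-values preserves the $\ab$-index; your chain-by-chain bookkeeping, using the additivity relations $\zetabar(V,Z)=\zetabar(V,X)+\zetabar(V,Y)$ below and $\zetabar(Z,W)=\zetabar(X,W)=\zetabar(Y,W)$ above, is exactly a spelled-out proof of that lemma in this geometric setting. You are in fact more thorough than the paper in one respect: you explicitly verify that $\Omega'$ satisfies the frontier condition and Whitney's conditions (A) and (B), which the paper does not address at all.
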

This result follows from
Lemma~5.4 in~\cite{Ehrenborg_Goresky_Readdy},
which shows that we can replace two elements in a quasi-graded
poset with their union if their up-sets
and their weighted zeta functions agree.

\section{Coalgebraic techniques and the operator ${\cal G}$}
\label{section_coalgebra}

Following~\cite{Ehrenborg_Readdy},
on the algebra of non-commutative polynomials
in the variables $\av$ and $\bv$ we define the coproduct
$\Delta : \zab \longrightarrow \zab \tensor \zab$
by
$$
\Delta(u_{1} u_{2} \cdots u_{k})
=
\sum_{i=1}^{k}
u_{1} \cdots u_{i-1} \tensor u_{i+1} \cdots u_{k}  ,
$$
where $u_{1} u_{2} \cdots u_{k}$ is an $\ab$-monomial
of length $k$
and extend $\Delta$ by linearity.
Note that $\Delta(1) = 0$.
Observe that this coproduct satisfies
the {\em Newtonian condition}:
\begin{equation}
\Delta(u \cdot v)
     =
\sum_{u} u_{(1)} \tensor u_{(2)} \cdot v
      +
\sum_{v} u \cdot v_{(1)} \tensor v_{(2)}   . 
\label{equation_Newtonian}
\end{equation}

Next we have the following result;
see~\cite[Theorem~2.5]{Ehrenborg_Goresky_Readdy}.
\begin{theorem}[Ehrenborg--Goresky--Readdy]
For a quasi-graded poset $(P,\rho,\zetabar)$,
$$  \Delta(\Psi(P,\rho,\zetabar))
    =
      \sum_{\hz < x < \ho}
          \Psi([\hz,x],\rho,\zetabar)
               \tensor
          \Psi([x,\ho],\rho_{x},\zetabar)   ,  $$
where the rank function $\rho_{x}$ is given
by $\rho_{x}(y) = \rho(y) - \rho(x)$.
\label{theorem_coalgebra_map}
\end{theorem}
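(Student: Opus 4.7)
The plan is to expand $\Psi(P,\rho,\zetabar)$ via its chain definition and then apply the coproduct $\Delta$ term by term, showing that after the dust settles the only surviving contributions are the cuts that occur at the distinguished letters $\bv$, which are exactly the ones labelling the interior elements of a chain.

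First I would establish the algebraic identity $\Delta((\av-\bv)^{m}) = 0$ for every $m \geq 1$. The case $m=1$ is immediate from $\Delta(\av) = \Delta(\bv) = 1 \tensor 1$. The inductive step uses the Newtonian condition~\eqref{equation_Newtonian} applied to the factorisation $(\av-\bv) \cdot (\av-\bv)^{m-1}$: each of the two resulting sums contains a factor of $\Delta(\av-\bv)$, so both vanish.

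Next I would write $\Psi(P,\rho,\zetabar) = \sum_{c} \zetabar(c) \cdot \wt(c)$ and compute $\Delta(\wt(c))$ for a chain $c = \{\hz = x_{0} < x_{1} < \cdots < x_{k} = \ho\}$ by iterating the Newtonian condition across the product~\eqref{equation_weight}. By the vanishing identity just established, every cut landing inside one of the $(\av-\bv)^{\rho(x_{i-1},x_{i})-1}$ blocks cancels, and only the $k-1$ cuts at the $\bv$-letters indexed by $x_{1}, \ldots, x_{k-1}$ contribute. Direct inspection shows that the cut at the $\bv$ associated with $x_{i}$ produces the tensor $\wt(c_{\leq i}) \tensor \wt(c_{\geq i})$, where $c_{\leq i}$ is the chain $\hz = x_{0} < \cdots < x_{i}$ viewed inside $[\hz, x_{i}]$ and $c_{\geq i}$ is the chain $x_{i} < \cdots < x_{k} = \ho$ viewed inside $[x_{i},\ho]$ with rank function $\rho_{x_{i}}(y) = \rho(y) - \rho(x_{i})$ (the weight depends only on rank differences, so this re-grading is harmless).

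Finally I would observe that the weighted zeta function factorises along the same split as $\zetabar(c) = \zetabar(c_{\leq i}) \cdot \zetabar(c_{\geq i})$, since it is defined as a product over covering pairs in the chain. Reindexing the resulting double sum over $(c, i)$ by first choosing the interior element $x = x_{i}$ with $\hz < x < \ho$, and then independently choosing a chain in $[\hz,x]$ and a chain in $[x,\ho]$, separates the tensor factors into $\Psi([\hz,x],\rho,\zetabar)$ and $\Psi([x,\ho],\rho_{x},\zetabar)$ respectively. The principal obstacle is the vanishing identity $\Delta((\av-\bv)^{m}) = 0$; once this lemma is secured, the remainder of the argument is essentially bookkeeping driven by the Newtonian condition.
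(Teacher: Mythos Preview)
The paper does not actually prove this theorem; it is quoted from \cite[Theorem~2.5]{Ehrenborg_Goresky_Readdy} without argument. Your proposal is the standard proof and is correct. One small imprecision: in the inductive step for $\Delta((\av-\bv)^{m}) = 0$, it is not true that \emph{both} Newtonian sums contain a factor of $\Delta(\av-\bv)$. Writing $(\av-\bv)^{m} = (\av-\bv) \cdot (\av-\bv)^{m-1}$, the first sum vanishes because $\Delta(\av-\bv) = 0$, while the second sum vanishes because of the induction hypothesis $\Delta((\av-\bv)^{m-1}) = 0$. With that correction the argument goes through exactly as you describe.
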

This result can be stated as
the $\ab$-index is a coalgebra map.
Namely, let $C$ be the $\Zzz$-module generated
by all isomorphism types of quasi-graded posets
and extend the $\ab$-index to be a linear map
$\Psi : C \longrightarrow \zab$.
Let $C$ be a coalgebra by defining the coproduct by
$\Delta(P,\rho,\zetabar)
    =
      \sum_{\hz < x < \ho}
            ([\hz,x],\rho,\zetabar)
                 \tensor
            ([x,\ho],\rho_{x},\zetabar)$.
Theorem~\ref{theorem_coalgebra_map} now states
$\Delta \circ \Psi = (\Psi \tensor \Psi) \circ \Delta$,
that is, $\Psi$ is a coalgebra homomorphism.

We now introduce a number of operators on $\zab$
that will be essential to describe the $\cd$-index of
manifold arrangements.
The operators, $\kappa$, $\eta$, $\varphi$ and $\omega$
were first introduced in~\cite{Billera_Ehrenborg_Readdy}
when studying flag vectors of oriented matroids.

Define the two algebra maps $\kappa$ and $\overline{\lambda}$
such that $\kappa(1) = \overline{\lambda}(1) = 1$ and
$$   \kappa(\av) = \av-\bv, \:\:\:\:
       \kappa(\bv) = 0, \:\:\:\:
       \overline{\lambda}(\av) = 0, \:\:\:\: \text{ and } \:\:\:\:
       \overline{\lambda}(\bv) = \av-\bv . $$
We use the notation $\overline{\lambda}$
to be consistent with the notation $\lambda$
in~\cite{Ehrenborg_Readdy_balanced}.

Define $\eta : \zab \longrightarrow \zab$
by
$$    \eta(w)
     =
        \begin{cases}
                2 \cdot (\av-\bv)^{m+k}
                     & \text{ if } w = \bv^{m} \cdot \av^{k} , \\
                0
                     & \text{ otherwise.}
        \end{cases}
$$
\begin{lemma}
Let $(P,\rho,\zeta)$ be a quasi-graded poset
where the weighted zeta function is the classical zeta function $\zeta$.
Then the following identities hold:
\begin{align}
\kappa(\Psi(P,\rho,\zeta))
& = 
(\av-\bv)^{\rho(P)-1}  , 
\label{equation_kappa} \\
\overline{\lambda}(\Psi(P,\rho,\zeta))
& = 
(-1)^{\rho(P)} \cdot \mu(P) \cdot (\av-\bv)^{\rho(P)-1}  , 
\label{equation_lambda} \\
\eta(\Psi(P,\rho,\zeta))
& = 
Z(P,\rho,\zeta) \cdot (\av-\bv)^{\rho(P)-1}  .
\label{equation_eta}
\end{align}
\end{lemma}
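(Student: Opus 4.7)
The strategy for all three identities is to apply the given operator to the chain expansion $\Psi(P,\rho,\zeta) = \sum_c \wt(c)$, where $c$ ranges over chains $\hz = x_0 < x_1 < \cdots < x_k = \ho$, and then invoke a poset-theoretic identity. For~\eqref{equation_kappa}, the algebra map $\kappa$ satisfies $\kappa(\bv) = 0$ and $\kappa(\av-\bv) = \av-\bv$, so $\kappa$ annihilates the weight of any chain with $k \geq 2$ (each such weight contains an interspersed $\bv$); only the chain $\{\hz < \ho\}$ survives, contributing $(\av-\bv)^{\rho(P)-1}$. For~\eqref{equation_lambda}, the algebra map $\overline{\lambda}$ sends $\av-\bv \mapsto -(\av-\bv)$ and $\bv \mapsto \av-\bv$; applied factor by factor, $\overline{\lambda}(\wt(c)) = (-1)^{\rho(P)-k}(\av-\bv)^{\rho(P)-1}$ for a chain of $k$ steps. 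Summing and applying Philip Hall's formula $\mu(P) = \sum_c (-1)^{k(c)}$ assembles these into the claimed identity.

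Identity~\eqref{equation_eta} is the main obstacle, because $\eta$ is not an algebra map and selects only $\ab$-monomials of the form $\bv^m\av^j$. The plan is to expand
$\wt(c) = (\av-\bv)^{n_0}\bv(\av-\bv)^{n_1}\bv\cdots\bv(\av-\bv)^{n_{k-1}}$
and identify which monomials in the expansion have this pure shape. The interspersed $\bv$'s force each of the first $k-1$ blocks to contribute only $\bv$'s (otherwise an $\av$ would precede a later interspersed $\bv$), while the last block is free to contribute $\bv^t\av^{n_{k-1}-t}$ for $0 \leq t \leq n_{k-1}$ with sign $(-1)^t$. Letting $y = x_{k-1}$ denote the penultimate element of $c$, collecting signs yields
\begin{equation*}
 \eta(\wt(c)) = \bigl(1+(-1)^{\rho(y,\ho)-1}\bigr)(-1)^{\rho(y)-(k-1)}(\av-\bv)^{\rho(P)-1}.
\end{equation*}

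Grouping chains by $y$ and applying Philip Hall to $\sum_{c'}(-1)^{k-1} = \mu(\hz,y)$, where $c'$ ranges over chains from $\hz$ to $y$ (with $y = \hz$ accounting for the trivial chain), the coefficient of $(\av-\bv)^{\rho(P)-1}$ in $\eta(\Psi(P,\rho,\zeta))$ reduces to
\begin{equation*}
 \sum_{\hz \leq y < \ho} \bigl(1+(-1)^{\rho(y,\ho)-1}\bigr)(-1)^{\rho(y)}\mu(\hz,y).
\end{equation*}
The final obstacle is to identify this sum with $Z(P,\rho,\zeta)$. The idea is to split the summand using $\rho(y) + \rho(y,\ho) = \rho(P)$ into two pieces, $\sum_{y<\ho}(-1)^{\rho(y)}\mu(\hz,y)$ and $-(-1)^{\rho(P)}\sum_{y<\ho}\mu(\hz,y)$. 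The inner sum $\sum_{y<\ho}\mu(\hz,y)$ evaluates to $-\mu(P)$ by the classical identity $\sum_{y\leq\ho}\mu(\hz,y) = 0$; the resulting $(-1)^{\rho(P)}\mu(P)$ then cancels exactly the boundary term at $y = \ho$ missing from the first sum, leaving $\sum_{y\leq\ho}(-1)^{\rho(y)}\mu(\hz,y) = Z(P,\rho,\zeta)$. This delicate cancellation is the crux of the argument.
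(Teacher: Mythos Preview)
Your proof is correct. For identities~\eqref{equation_kappa} and~\eqref{equation_lambda} your argument is exactly what the paper sketches (``direct observation'' and ``reformulation of Hall's formula''). For~\eqref{equation_eta} the paper simply cites \cite[Lemma~5.2 and Equation~(6)]{Billera_Ehrenborg_Readdy} and remarks that the graded-poset argument there carries over; you have instead written out a self-contained direct proof by expanding the chain weights, isolating the $\bv^{m}\av^{j}$ monomials, regrouping by the penultimate element $y$, and applying Philip Hall together with the Möbius identity $\sum_{y \leq \ho} \mu(\hz,y) = 0$. This is essentially the content of the cited lemma made explicit, so the underlying mathematics is the same, but your write-up has the advantage of being self-contained and of showing transparently where the factor of~$2$ in the definition of $\eta$ is absorbed by the alternating sum $\sum_{t=0}^{n_{k-1}} (-1)^{t} = \tfrac{1}{2}(1+(-1)^{n_{k-1}})$.
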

\begin{proof}
Equation~\eqref{equation_kappa}
is a direct observation.
See also~\cite[Equation~(5)]{Billera_Ehrenborg_Readdy}.
Equation~\eqref{equation_lambda}
is a reformulation of Hall's formula for the
M\"obius function.
Finally~\eqref{equation_eta}
follows
from~\cite[Lemma~5.2 and Equation~(6)]{Billera_Ehrenborg_Readdy}.
Although this reference only proves this relation for classical
graded posets, the proof for quasi-graded posets carries
through using the same techniques.
\end{proof}

Define the operator $\varphi$ as the sum $\sum_{k \geq 1} \varphi_{k}$,
where $\varphi_{k}$ is defined by the $k$-ary coproduct
$$    \varphi_{k}(w)
     =
        \sum_{w}
           \kappa(w_{(1)})
           \cdot \bv \cdot
           \eta(w_{(2)})
           \cdot \bv \cdots \bv \cdot
           \eta(w_{(k)})    .      $$
From~\cite[Lemmas~5.6 and~5.7]{Billera_Ehrenborg_Readdy}
we have the following lemma for evaluating $\varphi$.
\begin{lemma}
Let $v$ be an $\ab$-polynomial without a constant term.
Then $\varphi(v \cdot \mab) = \varphi(v) \cdot 2\dv$.
Furthermore,
let $x$ be either $\av$ or $\bv$
and assume that
the monomial $v \cdot x$ does not end with $\mab$.
Then $\varphi(v \cdot x) = \varphi(v) \cdot \cv$.
\label{lemma_varphi}
\end{lemma}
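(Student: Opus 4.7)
The plan is to unpack the definition $\varphi = \sum_{k \geq 1} \varphi_k$ as a sum over the $(k-1)$-element subsets of positions in the input monomial (the ``cut positions'' separating the pieces) and then exploit the strong vanishing properties of $\kappa$ and $\eta$. Recall that $\kappa$ kills any monomial containing $\bv$, so the first piece $w_{(1)}$ must be a power of $\av$, and $\eta$ kills any monomial not of the form $\bv^m \av^k$, so each subsequent piece is forced to have that shape. These restrictions reduce the proof to a transparent case analysis governed by the behavior of the last one or two letters of the input.

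For the second identity $\varphi(v \cdot x) = \varphi(v) \cdot \cv$: classify each $k$-ary split of $v \cdot x$ by whether the final letter $x$ is a cut or is kept. If $x$ is a cut, then $w_{(k)} = \emptyset$ with $\eta(\emptyset) = 2$, and the remaining pieces realize a $(k-1)$-ary split of $v$; summing over $k$ contributes $2\varphi(v)\bv$. If $x$ is kept, then $w_{(k)}$ is a tail of $v \cdot x$ ending in $x$, and the hypothesis that $v \cdot x$ does not end in $\mab$ combined with the vanishing pattern of $\eta$ gives the key relation $\eta(w_{(k)} \cdot x) = (\av-\bv) \cdot \eta(w_{(k)})$, where $w_{(k)}$ is the corresponding tail of $v$. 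Because each $\eta$-value is a scalar multiple of a power of $(\av-\bv)$, the factor $(\av-\bv)$ commutes past the earlier pieces, so this contribution sums to $\varphi(v)(\av-\bv)$. Adding the two pieces gives $\varphi(v)[(\av-\bv) + 2\bv] = \varphi(v)\cv$.

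For the first identity $\varphi(v \cdot \mab) = \varphi(v) \cdot 2\dv$: repeat the analysis for $w = v\av\bv$, now classifying the four possible colorings (cut vs.\ kept) of the last two letters. The ``both kept'' case vanishes, because the final piece would end in $\mab$, which is not of the form $\bv^m\av^k$. The ``$\av$ cut, $\bv$ kept'' case has $w_{(k)} = \bv$ and gives $2\varphi(v)\bv(\av-\bv)$ after summation. The ``$\av$ kept, $\bv$ cut'' case is computed by applying the already-proved second identity to the intermediate split of $v \cdot \av$, producing $2\varphi(v)(\av-\bv)\bv$. The ``both cut'' case contributes $4\varphi(v)\bv^2$. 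Collecting:
\[
\varphi(v \cdot \mab)
= 2\varphi(v)\bigl[\bv(\av-\bv) + (\av-\bv)\bv + 2\bv^2\bigr]
= 2\varphi(v)\bigl[\bv\av + \av\bv\bigr]
= \varphi(v) \cdot 2\dv.
\]

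The main obstacle is the careful tracking of when $\eta(w \cdot \bv)$ vanishes even though $\eta(w) \neq 0$: the identity $\eta(w \cdot \bv) = (\av-\bv)\eta(w)$ holds only when $w$ itself is a pure $\bv$-power, and it is precisely here that the hypothesis ``$v \cdot x$ does not end in $\mab$'' is needed to guarantee that the tails of $v$ appearing as $w_{(k)}$ behave well. When this hypothesis fails, as in the first identity where $x = \bv$ is preceded by $\av$, the clean reduction breaks and one is forced to split into the four cases above; the resulting ``correction'' is exactly the factor $2\dv$ in place of $\cv$.
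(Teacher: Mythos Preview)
The paper does not prove this lemma at all; it simply cites it from Lemmas~5.6 and~5.7 of Billera--Ehrenborg--Readdy, \emph{The $\cv$-$2\dv$-index of oriented matroids}. So there is nothing in the present paper to compare your argument against.

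Your proof is correct in substance. A couple of small points are worth tightening. First, in the ``$x$ kept'' branch the case $k=1$ does not involve $\eta$ at all: the entire word lands in $\kappa$, and the needed identity is $\kappa(v\cdot x)=\kappa(v)\,\kappa(x)$. For $x=\av$ this gives $\kappa(v)(\av-\bv)=\varphi_1(v)(\av-\bv)$, and for $x=\bv$ it gives $0$, which also equals $\varphi_1(v)(\av-\bv)$ because the hypothesis forces $v$ to end in $\bv$ and hence $\kappa(v)=0$. You should say this explicitly rather than folding it into the $\eta$ relation. Second, the remark that $(\av-\bv)$ ``commutes past the earlier pieces'' is unnecessary: in $\eta(v_{(k)}\cdot x)=\eta(v_{(k)})\cdot(\av-\bv)$ the factor already sits at the far right, so the sum over $k$-ary splits of $v$ immediately yields $(\varphi(v)-\kappa(v))(\av-\bv)$ without any commuting. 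Third, in your Case~3 for the $\av\bv$ identity you are not invoking the full second identity $\varphi(v\cdot\av)=\varphi(v)\cdot\cv$ but only its ``$\av$ kept'' half, which contributes $\varphi(v)(\av-\bv)$; the wording ``applying the already-proved second identity'' is slightly misleading and should be rephrased to make clear you are reusing that intermediate computation.
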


Define the linear map $\omega : \zab \longrightarrow \zcd$
by first defining $\omega$
on a monomial by replacing each occurrence of
$\av\bv$ by $2\dv$ and then replacing the remaining letters
by $\cv$. Extend by linearity to all $\ab$-polynomials in $\zab$.
As an example, $\omega(\av\bv\bv\av) = 2 \dv \cv^{2}$.
The $\omega$ map is equivalent to
Stembridge's peak algebra map $\theta$~\cite{Stembridge}.

From~\cite[Proposition~5.5]{Billera_Ehrenborg_Readdy}
we have:
\begin{proposition}
The two linear operators $\varphi$ and $\omega$ agree
on $\ab$-monomials that begin with the letter $\av$, that is,
$\varphi(\av \cdot v) = \omega(\av \cdot v)$.
\end{proposition}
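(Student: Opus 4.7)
The plan is to induct on the length of the monomial $\av \cdot v$, using the two recursions for $\varphi$ already supplied by Lemma~\ref{lemma_varphi}. My first step would be to verify that $\omega$ satisfies the same recursive identities: for any $\ab$-monomial $w$, both $\omega(w \cdot \av\bv) = \omega(w) \cdot 2\dv$ and $\omega(w \cdot x) = \omega(w) \cdot \cv$ whenever $w \cdot x$ does not end in $\av\bv$. These follow immediately from the definition of $\omega$: since two occurrences of $\av\bv$ in a monomial cannot overlap, appending $\av\bv$ at the end of $w$ contributes exactly one new occurrence of $\av\bv$ to be replaced by $2\dv$, and appending a letter $x$ that does not create a new trailing $\av\bv$ simply adds one extra $\cv$ at the end.

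The next step is to handle two base cases directly from the definitions. For the empty $v$, since $\Delta(\av) = 1 \otimes 1$, only $\varphi_{1}$ and $\varphi_{2}$ contribute, and
$$ \varphi(\av) = \kappa(\av) + \kappa(1) \cdot \bv \cdot \eta(1) = (\av - \bv) + 2\bv = \cv = \omega(\av) . $$
For $v = \bv$, the recursion of Lemma~\ref{lemma_varphi} cannot be applied (its hypothesis excludes a constant term), so I would compute $\varphi(\av\bv)$ directly using $\Delta(\av\bv) = 1 \otimes \bv + \av \otimes 1$ and $\Delta^{(2)}(\av\bv) = 1 \otimes 1 \otimes 1$; the resulting sum collapses to $2\av\bv + 2\bv\av = 2\dv = \omega(\av\bv)$.

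For the inductive step, assume the claim for all shorter monomials beginning in $\av$, and take $\av \cdot v$ with $|\av \cdot v| \geq 3$. If $\av \cdot v$ ends in $\av\bv$, write $\av \cdot v = w \cdot \av\bv$ with $w$ a nonempty monomial starting in $\av$; combining Lemma~\ref{lemma_varphi}, the recursion for $\omega$ established in the first step, and the inductive hypothesis gives $\varphi(\av \cdot v) = \varphi(w) \cdot 2\dv = \omega(w) \cdot 2\dv = \omega(\av \cdot v)$. Otherwise $\av\cdot v = w \cdot x$ with $w$ a monomial of length at least two beginning in $\av$, and the same reasoning with $\cv$ in place of $2\dv$ closes the induction.

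The main obstacle is an asymmetry between the recursions and the hypothesis: Lemma~\ref{lemma_varphi} extends monomials to the right, whereas the proposition constrains the leading letter. The base case $\varphi(\av) = \cv$ is therefore indispensable, because the extra $\bv$ contributed by $\varphi_{2}(\av) = 2\bv$ is exactly what converts $\kappa(\av) = \av - \bv$ into $\cv$. Without a leading $\av$ the statement collapses: for instance $\varphi(\bv) = 2\bv \neq \cv = \omega(\bv)$, and the induction would have no foothold.
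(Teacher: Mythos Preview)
The paper does not give its own proof; it simply cites Proposition~5.5 of~\cite{Billera_Ehrenborg_Readdy}. Your argument is the natural one and is essentially correct: verify that $\omega$ obeys the same right-extension rules as $\varphi$ does in Lemma~\ref{lemma_varphi}, check the seed values, and induct on length.

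There is one small omission. Your base cases cover $\av$ and $\av\bv$, and your inductive step begins at length~$3$; the monomial $\av\av$ of length~$2$ is never treated. This is easily repaired: the second part of Lemma~\ref{lemma_varphi} applies with $v = \av$ (which has no constant term) and $x = \av$, giving $\varphi(\av\av) = \varphi(\av)\cdot\cv = \cv^{2} = \omega(\av\av)$. Equivalently, you can start the inductive step at length~$2$ and require only that $w$ be a nonempty monomial beginning in~$\av$; the condition ``length at least two'' on $w$ is stronger than needed, since Lemma~\ref{lemma_varphi} only asks that $w$ have no constant term and the inductive hypothesis only needs $w$ to start with $\av$.
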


Now define the operator ${\cal G} : \zab \longrightarrow \zab$
by the relation
$$  {\cal G}(w)
    =
    \varphi(w) \cdot \bv
+
\sum_{w}
    \varphi(w_{(1)})
              \cdot \bv \cdot       
\overline{\lambda}(w_{(2)})
\cdot (\av-\bv)  .  $$
Applying this operator to the $\ab$-index
of a quasi-graded poset $P$
whose weighted zeta function is
the zeta function, we have
\begin{equation}
{\cal G}\big(\Psi(P)\big)
=
\varphi\big(\Psi(P)\big) \cdot \bv
+
\sum_{\hz < y < \ho}
    \varphi\big(\Psi([\hz,y])\big) 
              \cdot \bv \cdot       
\overline{\lambda}\big(\Psi([y,\ho])\big)
\cdot (\av-\bv) .
\label{equation_G_Psi}
\end{equation}

\begin{lemma}
For an $\ab$-polynomial $w$ we have
$$    {\cal G}(w \cdot \av) =     \frac{1}{2} \cdot \varphi(w \cdot \av \bv) . $$
\label{lemma_ending_a}
\end{lemma}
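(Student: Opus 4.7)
The plan is to unfold the definition of $\mathcal{G}$ on $w \cdot \av$ using the Newtonian condition on $\Delta$, and then observe that the main sum in the definition of $\mathcal{G}$ almost entirely collapses because $\overline{\lambda}$ kills any factor ending in $\av$.

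First I would apply the Newtonian condition~\eqref{equation_Newtonian} with $u = w$, $v = \av$, noting that $\Delta(\av) = 1 \tensor 1$, to obtain the splitting
$$ \Delta(w \cdot \av) \;=\; \sum_{w} w_{(1)} \tensor (w_{(2)} \cdot \av) \;+\; w \tensor 1 . $$
Substituting this into~\eqref{equation_G_Psi} (the definition of $\mathcal{G}$) and using the fact that $\overline{\lambda}$ is an algebra map with $\overline{\lambda}(\av) = 0$, every term of the form $\overline{\lambda}(w_{(2)} \cdot \av)$ vanishes. The only surviving contributions come from the unit term $w \tensor 1$ (giving $\varphi(w) \cdot \bv \cdot \overline{\lambda}(1) \cdot (\av - \bv) = \varphi(w) \cdot \bv \cdot (\av - \bv)$) and the leading summand $\varphi(w \cdot \av) \cdot \bv$.

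Next I would invoke Lemma~\ref{lemma_varphi}: since $w \cdot \av$ does not end with $\av\bv$, we have $\varphi(w \cdot \av) = \varphi(w) \cdot \cv$. Combining these pieces yields
$$ \mathcal{G}(w \cdot \av) \;=\; \varphi(w) \cdot \cv \cdot \bv + \varphi(w) \cdot \bv \cdot (\av - \bv) \;=\; \varphi(w) \cdot \bigl[\av\bv + \bv\bv + \bv\av - \bv\bv\bigr] \;=\; \varphi(w) \cdot \dv . $$
On the other hand, applying Lemma~\ref{lemma_varphi} in its first form gives $\varphi(w \cdot \av\bv) = \varphi(w) \cdot 2\dv$, so $\tfrac{1}{2} \varphi(w \cdot \av\bv) = \varphi(w) \cdot \dv$, matching the previous expression.

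There is no real obstacle here beyond bookkeeping: the proof is essentially a direct computation once the Newtonian splitting is applied and the vanishing $\overline{\lambda}(\av) = 0$ is noted. The only mild subtlety is the boundary case $w = 1$, which falls outside the hypothesis of Lemma~\ref{lemma_varphi}, but a direct check gives $\varphi(\av) = \cv = \varphi(1) \cdot \cv$, so the identity $\varphi(w \cdot \av) = \varphi(w) \cdot \cv$ extends by linearity, and one can alternatively assume $w$ has no constant term without loss of generality.
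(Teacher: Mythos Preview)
Your proof is correct and follows essentially the same route as the paper: apply the Newtonian condition to $\Delta(w\cdot\av)$, use $\overline{\lambda}(\cdots\cdot\av)=0$ to collapse the sum, then invoke Lemma~\ref{lemma_varphi} twice to identify the result as $\varphi(w)\cdot\dv = \tfrac{1}{2}\varphi(w\cdot\av\bv)$. Your extra remark on the constant-term case is more careful than the paper, which simply applies Lemma~\ref{lemma_varphi} without comment; one cosmetic point is that the definition of $\mathcal{G}$ you want is the displayed formula for $\mathcal{G}(w)$ itself rather than equation~\eqref{equation_G_Psi}, which is its specialization to $\Psi(P)$.
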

\begin{proof}
By the Newtonian condition~\eqref{equation_Newtonian} we have that
$\Delta(w \cdot \av)
=
w \tensor 1
+
\sum_{w} w_{(1)} \tensor w_{(2)} \cdot \av$.
Hence we have
\begin{align*}
{\cal G}(w \cdot \av)
  & = 
    \varphi(w \cdot \av) \cdot \bv
+
    \varphi(w)
              \cdot \bv \cdot       
\overline{\lambda}(1)
\cdot (\av-\bv) 
+
\sum_{w}
    \varphi(w_{(1)})
              \cdot \bv \cdot       
\overline{\lambda}(w_{(2)} \cdot \av)
\cdot (\av-\bv)  \\
  & = 
    \varphi(w) \cdot \cv \cdot \bv
+
    \varphi(w)
              \cdot \bv \cdot (\av-\bv)  \\
  & = 
    \varphi(w) \cdot \dv \\
  & = 
    1/2 \cdot \varphi(w \cdot \av \bv) ,
\end{align*}
where in the
second step we used
Lemma~\ref{lemma_varphi},
$\overline{\lambda}(1) = 1$
and
$\overline{\lambda}(w_{(2)} \cdot \av)
=
\overline{\lambda}(w_{(2)})
\cdot
\overline{\lambda}(\av)
=
0$,
and
in the fourth step Lemma~\ref{lemma_varphi} was applied again.
\end{proof}

\begin{proposition}
For any $\ab$-polynomial $v$ the operator ${\cal G}$ satisfies
\begin{align}
{\cal G}(1) & =  \bv ,
\label{equation_G_1} \\
{\cal G}(\av \cdot v) & =  \frac{1}{2} \cdot \omega(\av \cdot v \cdot \bv)  .
\label{equation_G_a}
\end{align}
\label{proposition_G}
\end{proposition}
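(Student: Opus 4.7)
The plan is to prove the two identities separately. Equation~(\ref{equation_G_1}) is immediate: since $\Delta(1) = 0$ the coproduct sum in the definition of ${\cal G}(1)$ vanishes, and all iterated coproducts of $1$ vanish as well, so $\varphi(1) = \kappa(1) = 1$ and hence ${\cal G}(1) = \varphi(1)\cdot\bv = \bv$.

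For equation~(\ref{equation_G_a}), by linearity of ${\cal G}$ and $\omega$ I may assume $v$ is a monomial and induct on its length. When $\av \cdot v$ ends in the letter $\av$, including the base case $v = 1$, I write $\av \cdot v = w\cdot\av$ and apply Lemma~\ref{lemma_ending_a} to get ${\cal G}(w\cdot\av) = \tfrac{1}{2}\varphi(w\cdot\mab)$. Since $w\cdot\av\bv$ begins with $\av$, the preceding proposition (that $\varphi$ and $\omega$ agree on monomials starting with $\av$) converts this to $\tfrac{1}{2}\omega(\av\cdot v\cdot\bv)$, and no inductive hypothesis is required.

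The remaining case is when $\av \cdot v$ ends in $\bv$. Write $\av\cdot v = u\cdot\bv$ with $u = \av\cdot v'$. I expand ${\cal G}(u\cdot\bv)$ via the Newtonian condition~(\ref{equation_Newtonian}); the coproduct $\Delta(u\bv) = \sum_u u_{(1)}\tensor u_{(2)}\cdot\bv + u\tensor 1$ produces two families of terms. Using the multiplicativity of $\overline{\lambda}$ (with $\overline{\lambda}(\bv) = \av-\bv$ and $\overline{\lambda}(1) = 1$) and recognizing the remaining coproduct sum as $({\cal G}(u) - \varphi(u)\cdot\bv)\cdot(\av-\bv)$ via the definition of ${\cal G}(u)$, the two $\varphi(u)\cdot\bv\cdot(\av-\bv)$ contributions cancel, leaving
\begin{equation*}
   {\cal G}(u\cdot\bv) = \varphi(u\cdot\bv)\cdot\bv + {\cal G}(u)\cdot(\av-\bv).
\end{equation*}
Substituting the inductive hypothesis ${\cal G}(u) = \tfrac{1}{2}\omega(u\cdot\bv)$ and the equality $\varphi(u\cdot\bv) = \omega(u\cdot\bv)$ (again from the preceding proposition, since $u\cdot\bv$ begins with $\av$) simplifies this to $\omega(u\cdot\bv)\cdot[\bv + \tfrac{1}{2}(\av-\bv)] = \tfrac{1}{2}\omega(u\cdot\bv)\cdot\cv$. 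To finish, $\omega(u\cdot\bv\cdot\bv) = \omega(u\cdot\bv)\cdot\cv$, because the appended $\bv$ is preceded by the trailing $\bv$ of $u\cdot\bv$ and therefore cannot pair into an $\av\bv$ block under the greedy parsing defining $\omega$.

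The main obstacle is the bookkeeping of the $(\av-\bv)$ factors in this last case: one must verify that the $(\av-\bv)^{2}$ arising from $\overline{\lambda}(u_{(2)}\cdot\bv)\cdot(\av-\bv)$ and the boundary contribution $\varphi(u)\cdot\bv\cdot(\av-\bv)$ from the $u\tensor 1$ summand telescope cleanly into the displayed two-term identity. Once this cancellation is secured, the inductive step is driven entirely by two invocations of the $\varphi$-$\omega$ proposition and the elementary identity $\omega(u\cdot\bv^{2}) = \omega(u\cdot\bv)\cdot\cv$.
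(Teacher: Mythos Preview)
Your proof is correct and follows essentially the same inductive route as the paper. The only stylistic differences are that you absorb the base case $v=1$ into the ``ends in~$\av$'' case via Lemma~\ref{lemma_ending_a} (whereas the paper computes ${\cal G}(\av)$ directly), and you translate from $\varphi$ to $\omega$ at each step while the paper carries $\varphi$ throughout and uses Lemma~\ref{lemma_varphi} (rather than the definition of~$\omega$) for the identity $\varphi(w\cdot\bv\bv)=\varphi(w\cdot\bv)\cdot\cv$; the underlying argument is the same.
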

\begin{proof}
It is a straightforward
verification that ${\cal G}(1) = \bv$.
Next we prove statement~\eqref{equation_G_a}
by induction.
The induction basis is $v = 1$, which
follows from
${\cal G}(\av)
=
\varphi(\av) \cdot \bv 
+
\varphi(1) \cdot \bv \cdot \overline{\lambda}(1) \cdot (\av-\bv)
=
\cv \cdot \bv
+
\bv \cdot (\av-\bv)
=
\dv
=
1/2 \cdot \varphi(\av\bv)$.
Assume now that the statement holds for $v$
and let $w = \av \cdot v$.
By Lemma~\ref{lemma_ending_a}
we know it is true for $v \cdot \av$.
The last case to consider is~$v \cdot \bv$
and again use $w = \av \cdot v$.
The Newtonian condition~\eqref{equation_Newtonian}
implies
$\Delta(w \cdot \bv)
=
w \tensor 1
+
\sum_{w} w_{(1)} \tensor w_{(2)} \cdot \bv$.
Now
\begin{align*}
{\cal G}(w \cdot \bv)
  & = 
    \varphi(w \cdot \bv) \cdot \bv
+
    \varphi(w)
              \cdot \bv \cdot       
\overline{\lambda}(1)
\cdot (\av-\bv) 
+
\sum_{w}
    \varphi(w_{(1)})
              \cdot \bv \cdot       
\overline{\lambda}(w_{(2)} \cdot \bv)
\cdot (\av-\bv)  \\
  & = 
    \varphi(w \cdot \bv) \cdot \bv
+
\Big(
    \varphi(w)
              \cdot \bv
+
\sum_{w}
    \varphi(w_{(1)})
              \cdot \bv \cdot       
\overline{\lambda}(w_{(2)})
\cdot (\av-\bv)
\Big)
\cdot (\av-\bv)  \\
  & = 
    \varphi(w \cdot \bv) \cdot \bv
+
{\cal G}(w)
\cdot (\av-\bv)  \\
  & = 
    \varphi(w \cdot \bv) \cdot \bv
+
1/2 \cdot \varphi(w \cdot \bv)  \cdot (\av-\bv) \\
  & = 
1/2 \cdot \varphi(w \cdot \bv)  \cdot \cv \\
  & = 
1/2 \cdot \varphi(w \cdot \bv \bv) ,
\end{align*}
where the fourth step is the induction hypothesis
and the sixth step is Lemma~\ref{lemma_varphi},
completing the induction.
\end{proof}

\section{The induced stratification}
\label{section_induced}

Let $M$ be an $n$-dimensional manifold and
let $\{N_{i}\}_{i=1}^{m}$ be a manifold
arrangement in the boundary of~$M$.
The arrangement induces a Whitney
stratification of the boundary of $M$ as follows.
Recall that for an intersection
$x = \bigcap_{i \in I} N_{i}$
we let 
$x^{\circ}$ be all points in $x$
not contained in any submanifold of~$x$;
see~\eqref{equation_interior}.
Now the induced subdivision $T$ is the collection of all
connected components of 
$\left(\bigcap_{i \in I} N_{i}\right)^{\circ}$,
where $I$ ranges over all index sets,
together with
the empty strata $\emptyset$, and
the manifold~$M$ as the maximal strata.
Observe that the empty index set yields
the connected components of $(\partial M)^{\circ}$.

\begin{proposition}
The stratification $T$ is a Whitney stratification.
\label{proposition_T_Whitney}
\end{proposition}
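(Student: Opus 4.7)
The plan is to reduce every defining property of a Whitney stratification to a linear model via Bott's clean intersection property. Each nontrivial stratum of $T$ is, by construction, a connected component of $(\bigcap_{i \in I} N_{i})^{\circ}$ for some index set $I$. At a point $p$ of such a stratum, the clean intersection property furnishes a diffeomorphism $\phi : U \longrightarrow W \subseteq \Rrr^{n}$ carrying the local arrangement $\{N_{i} \cap U\}$ to a subspace arrangement $\{V_{i} \cap W\}$ and sending $p$ to the origin. Under $\phi$ the stratum corresponds to an open subset of the linear subspace $V_{X} = \bigcap_{i \in I} V_{i}$ with the strictly smaller intersections of the local arrangement deleted, which is a locally closed smooth submanifold. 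This shows at once that every stratum of $T$ is locally closed and smooth.

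Next I would verify the condition of the frontier. Suppose $X$ and $Y$ are strata of $T$ with $X \cap \overline{Y} \neq \emptyset$, and pick $p \in X \cap \overline{Y}$. Passing to the local model, the point $p$ lies in both $V_{X} = \bigcap_{i \in I} V_{i}$ and $V_{Y} = \bigcap_{j \in J} V_{j}$ while lying in no strictly smaller intersection of the local arrangement, forcing $J \subseteq I$ and hence $V_{X} \subseteq V_{Y}$. Because the strictly smaller intersections have positive codimension inside $V_{Y}$, they do not disconnect $V_{Y} \cap W$ for sufficiently small $W$, so $Y$ is locally dense in $V_{Y} \cap W$; therefore $X \cap U \subseteq \overline{Y}$. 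The subset of $X$ consisting of points with this local containment property is both open and closed in $X$, so by the connectedness required of each stratum it equals all of $X$, giving $X \subseteq \overline{Y}$.

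Finally, Whitney's conditions (A) and (B) reduce in the local model to a trivial linear verification. For nested linear subspaces $V_{X} \subseteq V_{Y}$, every tangent space of $V_{Y}$ equals $V_{Y}$, so any limiting plane $\tau$ of a convergent sequence $T_{y_{i}} Y$ is simply $V_{Y}$, and the containment $T_{x} X = V_{X} \subseteq V_{Y} = \tau$ is condition (A). Any secant line $\overline{x_{i} y_{i}}$ joining points of $V_{X}$ and $V_{Y}$ lies in the linear space $V_{Y}$, so its limit $\ell$ also lies in $V_{Y} = \tau$, giving condition (B). Since Whitney's conditions are diffeomorphism invariant, these local verifications transfer back to $M$. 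The main obstacle is the condition of the frontier, the only nonlocal requirement in the definition: all other ingredients are immediate linear statements once the clean intersection property has reduced the picture to a subspace arrangement, while the propagation of local containment across $X$ is exactly the role played by the connectedness built into the definition of a stratum.
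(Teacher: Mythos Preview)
Your verification of Whitney's conditions (A) and (B) is exactly the paper's: the clean intersection property gives local coordinates in which the strata are open pieces of linear subspaces, so the tangent planes are constant and both conditions are immediate. The paper's proof in fact stops there and does not explicitly address smoothness of the strata or the condition of the frontier; you supply both, which is more complete than what the paper records.

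One small gap in your frontier argument: the claim that the strictly smaller intersections ``do not disconnect $V_{Y} \cap W$'' requires $\dim V_{Y} \geq 2$. When $\dim V_{Y} = 1$, removing the origin leaves two half-intervals, and $Y \cap U$ may well be only one of them, so $Y$ need not be locally dense in $V_{Y} \cap W$. The desired conclusion $X \cap U \subseteq \overline{Y}$ still holds in that case, since $V_{X} \subsetneq V_{Y}$ then forces $V_{X} = \{0\}$ and hence $X \cap U = \{p\} \subseteq \overline{Y}$ by hypothesis, but this edge case should be handled separately rather than swept under the connectedness claim. (A related looseness: ``forcing $J \subseteq I$'' conflates the global and local index sets; what is actually true is that in the local model every $V_{i}$ passes through the origin, so $V_{X}$ is the intersection of \emph{all} local subspaces and the containment $V_{X} \subseteq V_{Y}$ is automatic.)
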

\begin{proof}
Pick two strata $X$ and $Y$ from $T$ where $X <_{T} Y$
and a point $x$ in $X$.
Since the clean intersection property holds at the point $x$ we can choose
a local coordinate system around $x$ such that the two strata
$X$ and~$Y$ are locally straight in a neighborhood $U$ around $x$,
that is, for any point $p$ close enough to $x$ the 
tangent planes~$T_{p} X$, respectively  $T_{p} Y$, 
are independent of the point $p$.
Let $y_{i} \in Y$ be any sequence of points
converging to the point $x$. Without loss of
generality, we may assume that $y_{i}$ lies
in the neighborhood $U$.
Since the tangent planes  $T_{y_{i}} Y$ are all the same,
they are in fact equal to the limiting plane $\tau$.
Hence Whitney's condition $(A)$ holds:
$T_{x} X \subseteq T_{x} Y = \tau$.
Similarly, 
let $x_{i} \in X$ be any sequence of points
converging to the point $x$. 
Again, we may assume that $x_{i} \in U$.
Now all the lines~$\ell_{i} = \overline{x_{i} y_{i}}$ lie in the plane $\tau$
so that the limiting line $\ell$ also lies in $\tau$, and thus
Whitney's condition $(B)$ holds.
\end{proof}

\begin{example}
{\rm
Consider the subspace arrangement in $\Rrr^{3}$
consisting of
the two planes $x=0$ and $y=0$, and
the line $x=y=z$.
Intersect this arrangement with $S^{2}$
to obtain a spherical arrangement
consisting of two $1$-dimensional spheres
and one $0$-dimensional sphere.
The face poset of the induced stratification of the sphere
consists of four points
$\pm(0,0,1)$
and
$\pm(1/\sqrt{3},1/\sqrt{3},1/\sqrt{3})$;
four open edges ($1$-dimensional strata)
each emanating from $(0,0,1)$ to $(0,0,-1)$;
and finally, four $2$-dimensional strata
where two of the strata are discs
(the $x$ and $y$ coordinates have different signs)
and the other two strata are punctured discs
(the $x$ and $y$ coordinates have the same sign).
See Figure~\ref{figure_three}
for the face poset $T$.
}
\label{example_one}
\end{example}

\begin{figure}
\setlength{\unitlength}{1mm}

\begin{center}
\begin{picture}(50,40)(0,0)
\qbezier(25,0)(25,0)(0,10)
\qbezier(25,0)(25,0)(20,10)
\qbezier(25,0)(25,0)(30,10)
\qbezier(25,0)(25,0)(50,10)

\qbezier(20,10)(20,10)(10,20)
\qbezier(20,10)(20,10)(20,20)
\qbezier(20,10)(20,10)(30,20)
\qbezier(20,10)(20,10)(40,20)
\qbezier(30,10)(30,10)(10,20)
\qbezier(30,10)(30,10)(20,20)
\qbezier(30,10)(30,10)(30,20)
\qbezier(30,10)(30,10)(40,20)

\qbezier(10,20)(10,20)(10,30)
\qbezier(20,20)(20,20)(10,30)
\qbezier(20,20)(20,20)(30,30)
\qbezier(40,20)(40,20)(30,30)
\qbezier(40,20)(40,20)(40,30)
\qbezier(30,20)(30,20)(40,30)
\qbezier(30,20)(30,20)(20,30)
\qbezier(10,20)(10,20)(20,30)

\qbezier(0,10)(0,10)(10,30)
\qbezier(50,10)(50,10)(40,30)

\qbezier(10,30)(10,30)(25,40)
\qbezier(20,30)(20,30)(25,40)
\qbezier(30,30)(30,30)(25,40)
\qbezier(40,30)(40,30)(25,40)

\put(25,0){\circle*{1}}
\put(0,10){\circle*{1}}
\put(20,10){\circle*{1}}
\put(30,10){\circle*{1}}
\put(50,10){\circle*{1}}
\multiput(10,20)(0,10){2}{\multiput(0,0)(10,0){4}{\circle*{1}}}
\put(25,40){\circle*{1}}

\put(-3,9){$a$}
\put(52,9){$b$}
\put(6,29){$A$}
\put(42,29){$B$}

\end{picture}
\hspace*{25 mm}
\begin{picture}(30,40)(0,0)
\qbezier(5,0)(5,0)(5,10)
\qbezier(5,0)(5,0)(25,10)
\qbezier(5,10)(5,10)(0,20)
\qbezier(5,10)(5,10)(10,20)
\qbezier(0,20)(0,20)(5,30)
\qbezier(10,20)(10,20)(5,30)
\qbezier(25,10)(25,10)(5,30)
\qbezier(5,30)(5,30)(5,40)

\put(5,0){\circle*{1}}
\put(5,10){\circle*{1}}
\put(25,10){\circle*{1}}
\put(0,20){\circle*{1}}
\put(10,20){\circle*{1}}
\put(5,30){\circle*{1}}
\put(5,40){\circle*{1}}

\put(2,-1){$\hz$}
\put(2,9){$c$}
\put(27,9){$d$}
\put(-3,19){$e$}
\put(10.5,19){$f$}
\put(1,29){$C$}
\put(2,39){$\ho$}
\end{picture}
\end{center}
\caption{The face poset $T$ of the induced subdivision
and the quasi-graded poset $Q$
of Example~\ref{example_one}.}
\label{figure_three}
\end{figure}
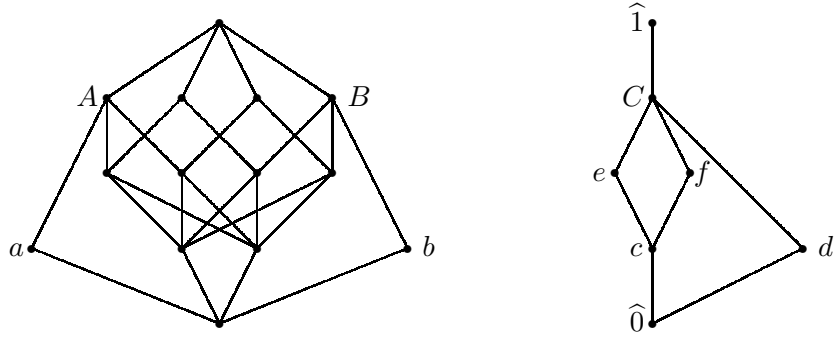

\begin{theorem}
Let $M$ be an $n$-dimensional manifold.
Let $\{N_{i}\}_{i=1}^{m}$ be a manifold
arrangement in the boundary $\partial M$
with an intersection poset $P$.
Let $T$ be the induced Whitney stratification of $M$.
Then the reverse of the $\cd$-index of $T$ is given
\begin{align*}
    \Psi(T)^{*}
  & = 
  \chi(M) \cdot 
\begin{cases}
          (\cv^{2} - 2 \dv)^{n/2}
& \text{if $n$ is even,}  \\
          \cv
             \cdot
          (\cv^{2} - 2 \dv)^{(n-1)/2}
& \text{if $n$ is odd,} 
 \end{cases}   \\
&
 +
      \sum_{\substack{x \in {P}, x > \hz \\ \dim(x) \text{ even}}}
          \frac{1}{2} \cdot 
                   \omega\big(\av \cdot \Psi([\hz,x]) \cdot \bv\big)
             \cdot
          (\cv^{2} - 2 \dv)^{\dim(x)/2}
             \cdot
          \chi(x)    .
\end{align*}
\label{theorem_main}
\end{theorem}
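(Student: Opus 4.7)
My plan is to reduce the computation to a coarser Whitney stratification $Q$ and then to identify $\Psi(Q)$ as a value of the operator ${\cal G}$ from Section~\ref{section_coalgebra}; Proposition~\ref{proposition_G} then converts ${\cal G}$ into the $\omega$-form appearing in the theorem.

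First, I would invoke Proposition~\ref{proposition_T_Q} (an iterated application of Proposition~\ref{proposition_X_Y}) to merge the connected components of each $x^{\circ}$ into a single stratum $Q_{x}$, producing a coarser Whitney stratification $Q$ with $\Psi(T)=\Psi(Q)$.  The face poset of $Q$ consists of a formal $\hz$, the strata $\{Q_{x}:x\in P\setminus\{\emptyset\}\}$ (which are order-anti-isomorphic to $P\setminus\{\emptyset\}$), and the open interior of $M$ as $\ho$.  The clean intersection property provides, at any point of $Q_{y}=y^{\circ}$, local coordinates in which the arrangement becomes a linear subspace arrangement with intersection poset $[\hz_{P},y]_{P}$; the link of $Q_{y}$ inside $Q_{x}$ for $Q_{y}\leq_{Q}Q_{x}$ (equivalently $x\leq_{P}y$) is therefore the complement in a sphere of the subspace arrangement with intersection poset $[x,y]_{P}$, and Theorem~\ref{theorem_sphere} gives
\[ \zetabar_{Q}(Q_{y},Q_{x})=Z([x,y]_{P},\rho,\zeta). \]
A parallel local analysis yields $\zetabar_{Q}(Q_{y},\ho)=1$ (the link is a contractible open hemisphere), while $\zetabar_{Q}(\hz,Q_{x})=\chi(x^{\circ})$ and $\zetabar_{Q}(\hz,\ho)=\chi(M)$.

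Next, I would expand $\Psi(Q)$ as a sum over chains in $Q$.  The trivial chain $\hz<\ho$ contributes $\chi(M)\cdot(\av-\bv)^{n}$; any other chain is indexed by a chain $y_{1}<_{P}\cdots<_{P}y_{k}$ in $P\setminus\{\emptyset\}$ and contributes $\chi(y_{k}^{\circ})\cdot\prod_{i=2}^{k}Z([y_{i-1},y_{i}]_{P})\cdot\wt(c)$.  I would substitute the M\"obius inversion from the proof of Theorem~\ref{theorem_complement} in the form $\chi(y_{k}^{\circ})=\sum_{z\geq_{P}y_{k}}(-1)^{\rho(y_{k},z)}\mu(y_{k},z)\chi(z)$, interchange summations, and group by the maximal element $z$.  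By the identities~\eqref{equation_kappa}, \eqref{equation_lambda}, and~\eqref{equation_eta}, each middle factor $(\av-\bv)^{\rho(y_{i-1},y_{i})-1}\cdot Z([y_{i-1},y_{i}]_{P})$ is $\eta(\Psi([y_{i-1},y_{i}]))$, the initial factor $(\av-\bv)^{\rho(\hz,y_{1})-1}$ is $\kappa(\Psi([\hz,y_{1}]))$, and the remaining M\"obius contribution is $\overline{\lambda}(\Psi([y_{k},z]))$.  Summed over chains in $[\hz_{P},z]$, the coalgebra identity of Theorem~\ref{theorem_coalgebra_map} assembles these pieces into exactly ${\cal G}(\av\cdot\Psi([\hz_{P},z]))$, multiplied by the residual top-gap factor $(\av-\bv)^{\dim(z)}$ coming from the uppermost step of the $Q$-chain weight.

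Finally, Proposition~\ref{proposition_G} identifies ${\cal G}(\av\cdot\Psi([\hz_{P},z]))$ with $\tfrac{1}{2}\omega(\av\cdot\Psi([\hz_{P},z])\cdot\bv)$, and the identity $(\av-\bv)^{2m}=(\cv^{2}-2\dv)^{m}$ converts the top-gap factor to $(\cv^{2}-2\dv)^{\dim(z)/2}$.  Poincar\'e duality on closed odd-dimensional manifolds forces $\chi(z)=0$ for odd $\dim(z)$, so only even-dimensional $z$ survive, matching the sum in the theorem.  The case $z=\hz_{P}$ is handled separately and, together with the trivial-chain contribution $\chi(M)\cdot(\av-\bv)^{n}$, is absorbed into the isolated $\chi(M)$ term of the statement.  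Applying the involution $(\cdot)^{*}$ to reverse the $\cd$-index then yields the stated formula for $\Psi(T)^{*}$.  The principal obstacle is the chain-sum bookkeeping in the previous paragraph: correctly matching the accumulated product of $\kappa$-, $\eta$- and $\overline{\lambda}$-contributions against the defining formula of ${\cal G}$, and verifying that the residual top factor is the expected $(\av-\bv)^{\dim(z)}$; a subsidiary technical point is rigorously justifying the local link computation from the clean intersection property.
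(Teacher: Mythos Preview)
Your proposal is correct and follows essentially the same route as the paper: pass from $T$ to the coarser stratification $Q$ via Proposition~\ref{proposition_T_Q}, identify $\zetabar_{Q}$ with Zaslavsky invariants (the paper packages your inline M\"obius inversion of $\chi(y_{k}^{\circ})$ as the invariant $Z_{M}$ and an operator $\eta_{M}$), expand over chains to recognize ${\cal G}$, and then apply Proposition~\ref{proposition_G}. One bookkeeping point: with the factor ordering you describe (the $\kappa$-piece attached to $[\hz_{P},y_{1}]$ and the residual $(\av-\bv)^{\dim(z)}$ on the right), you are already computing $\Psi(Q)^{*}$ rather than $\Psi(Q)$, so the final application of $(\cdot)^{*}$ is superfluous---the paper avoids this by working from the outset in $\PPP=P\cup\{\widehat{-1}\}\cong Q^{*}$.
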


Before proving Theorem~\ref{theorem_main},
we first introduce a quasi-graded poset $Q$
that is different 
than the quasi-graded poset $T$.
This new quasi-graded poset will be smaller than $T$.
However, it will have the same $\cd$-index as $T$.
Let $\PPP = P \cup \{\widehat{-1}\}$ be the intersection poset
with a new minimal element~$\widehat{-1}$ adjoined.

Define $Q$ to be the poset 
$$
Q
=
\{x^{\circ} \:\: : \:\: x \in P\} \cup \{M\}
$$
where the partial order is
$x^{\circ} \leq y^{\circ}$ if $x \subseteq y$.
Observe that this condition is equivalent to
$x^{\circ} \subseteq \overline{y^{\circ}}$ since
$\overline{y^{\circ}} = y$. This is the partial order
of the face poset of the induced stratification
where the strata are of the form~$x^{\circ}$.
Furthermore, note that $M$ is the maximal element
and it covers the element $(\partial M)^{\circ}$,
which is the unique element of rank $n$.

Define the rank function $\rho_{Q}$ by
$$  \rho_{Q}(x^{\circ})
=
\begin{cases}
0 & \text{ if } x^{\circ} = \hz,\\
\dim(x^{\circ}) + 1 & \text{ otherwise,} \\
\end{cases}
$$
and the weighted zeta function $\zetabar_{Q}$ by
$$  \zetabar_{Q}(x^{\circ},y^{\circ})
=
\begin{cases}
\chi(y^{\circ}) & \text{ if } \hz = x^{\circ} , \\
\chi(\link_{y^{\circ}}(x^{\circ})) & \text{ if } \hz < x^{\circ} .
\end{cases}
$$
As a poset $Q$ is the dual poset of $\PPP$
via the map $\psi(x^{\circ}) = x$
and
$\psi(M) = \widehat{-1}$.

There is a natural order-preserving map $z$
from the quasi-graded face poset $T$
to the poset~$Q$.
Namely, for an element $x$ in $T$ let $z(x)$
be the rank-wise smallest element in the poset $Q$
containing~$x$.
Observe the map $z$ also preserves the rank function,
that is, 
$\rho_{T}(x) = \rho_{Q}(z(x))$.
As a side comment, the reason why this map is called $z$ stems 
from oriented matroid theory, as it selects
the coordinates that are equal to zero in
the covectors of the oriented
matroid~\cite[Section~4.6]{OM}.

The same argument as Proposition~\ref{proposition_T_Whitney}
yields:
\begin{proposition}
The stratification $Q$ is a Whitney stratification.
\label{proposition_Q_Whitney}
\end{proposition}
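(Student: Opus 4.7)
The plan is to recycle the proof of Proposition~\ref{proposition_T_Whitney} essentially verbatim. A stratum $x^\circ$ of $Q$ is, in any sufficiently small neighborhood of a point $p \in x^\circ$, indistinguishable from the $T$-stratum through~$p$; it is merely a disjoint union of such pieces over the finitely many components of $x$. Whitney's conditions (A) and (B) are local, so at $p$ the clean intersection property supplies a chart in which the entire arrangement is linear through the origin. Every $y^\circ$ with $x^\circ <_Q y^\circ$ is locally straight in this chart with constant tangent plane, so any limiting plane $\tau$ and limiting secant line $\ell$ must coincide with $T_p(y^\circ)$, giving $T_p(x^\circ) \subseteq \tau$ and $\ell \subseteq \tau$ exactly as before.

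What is no longer automatic is the condition of the frontier~\eqref{equation_condition_of_the_frontier}, since a stratum of $Q$ is typically disconnected and could \emph{a priori} have one connected component meeting $\overline{y^\circ}$ while another does not. I would verify it as follows. Because $x^\circ$ is open and dense in the closed set $x$, the closure $\overline{x^\circ}$ equals $x$; thus it suffices to show that $x^\circ \cap y \neq \emptyset$ implies $x \subseteq y$. Say $x$ is a union of components of $\bigcap_{i\in I} N_i$ and $y$ a union of components of $\bigcap_{j\in J} N_j$, set $K = I \cup J$, and choose $p$ in $x^\circ \cap y$. Let $z \in P$ be the element whose decomposition includes the component of $\bigcap_{k\in K} N_k$ through~$p$. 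Axiom~(iv) applied to $I \subseteq K$ forces $z \subseteq x$, and applied to $J \subseteq K$ forces $z \subseteq y$. But $p \in x^\circ$ excludes $p$ from every proper sub-element of $x$, so $z = x$, and hence $x \subseteq y$.

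I expect the frontier step to be the only non-mechanical part of the argument; once one has identified axiom~(iv) as the right tool, the conclusion is immediate. The Whitney (A) and (B) checks are a direct transcription of Proposition~\ref{proposition_T_Whitney}, and no new topological input is required beyond the clean intersection property, so the proof should be quite short in its final form.
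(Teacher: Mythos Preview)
Your proposal is correct; the paper's proof is literally the one line ``The same argument as Proposition~\ref{proposition_T_Whitney} yields,'' and your treatment of Whitney's conditions (A) and (B) matches this exactly. You go beyond the paper by explicitly checking the condition of the frontier via axiom~(iv) of the intersection poset --- a step the paper omits but clearly anticipates, since it motivates axiom~(iv) right after the definition by remarking that it ``mimics the condition of the frontier for stratified spaces.''
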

Since the links of a Whitney stratification are well-defined,
we have the next corollary.
\begin{corollary}
Let $x$ and $y$ be two manifolds in the intersection poset
such that $x \subseteq y$.
Furthermore, let $p$ and $q$ be two points in $x^{\circ}$.
Let $N_{p}$ and $N_{q}$
be the normal slices to $x$ at $p$,
respectively at~$q$.
Then the two spaces
\begin{equation}
   y^{\circ} \cap N_{p} \cap \partial B_{\epsilon}(p)
\:\:\:\: \text{ and } \:\:\:\:
   y^{\circ} \cap N_{q} \cap \partial B_{\epsilon}(q)
\label{equation_p_q}
\end{equation}
are homeomorphic for small enough $\epsilon > 0$.
\label{corollary_p_q}
\end{corollary}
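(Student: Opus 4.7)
The plan is very short: this is essentially the well-definedness of the link of one stratum in another within the Whitney stratification $Q$. Recall from the paragraph surrounding equation~\eqref{equation_link} that for strata $X \leq Y$ in any Whitney stratification, the homeomorphism type of $Y \cap N_{p} \cap \partial B_{\epsilon}(p)$ is independent of the point $p \in X$, the choice of normal slice $N_{p}$, and the radius $\epsilon$ (provided $\epsilon$ is sufficiently small); this is what defines $\link_{Y}(X)$. Since $x \subseteq y$ translates to $x^{\circ} \leq_{Q} y^{\circ}$, by Proposition~\ref{proposition_Q_Whitney} both expressions in \eqref{equation_p_q} are realizations of the same link $\link_{y^{\circ}}(x^{\circ})$, merely at different base points and with different normal slices, and are therefore homeomorphic.

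The delicate point, and the only place where the statement is not a verbatim quotation of a standard fact, is that we permit $x^{\circ}$ to be disconnected: the paper announces this explicitly, and it is essential in the sequel. If $p$ and $q$ happen to lie in different connected components of $x^{\circ}$, the classical path-argument for transporting the link type from one base point to another is unavailable. To handle this, the plan is to appeal to the clean intersection property, exactly as in the proof of Proposition~\ref{proposition_T_Whitney}: around every point of $x^{\circ}$ there is a local diffeomorphism carrying $(M,\{N_{i}\})$ to $(\Rrr^{n},\{V_{i}\})$ for a subspace arrangement whose combinatorial type is determined by the index set $I$ with $x = \bigcap_{i \in I} N_{i}$ and the enlargement $J \supseteq I$ with $y = \bigcap_{j \in J} N_{j}$. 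The intersection in \eqref{equation_p_q} is determined up to homeomorphism by this local subspace model, and since the model is the same at $p$ and at $q$, the two spaces agree.

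I expect this cross-component uniformity to be the only substantive step in the proof; once it is granted, the corollary is immediate from Proposition~\ref{proposition_Q_Whitney} and the definition of the link. No additional machinery beyond the clean intersection property and the Whitney stratification theory already in place is required.
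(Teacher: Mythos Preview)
Your first paragraph is exactly the paper's proof: the paper writes a single line, ``The two spaces in~\eqref{equation_p_q} are both homeomorphic to $\link_{y}(x)$,'' invoking Proposition~\ref{proposition_Q_Whitney} together with the well-definedness of links stated near equation~\eqref{equation_link}.

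Your second paragraph goes beyond what the paper records here.  The paper does not separately treat the cross-component case in this proof; it simply takes the independence of the link from the basepoint as already in force for the (possibly disconnected) strata of $Q$, deferring to the discussion in Section~\ref{section_cd} and the reference cited there.  Your instinct to ground this in the clean intersection property is reasonable, but note two points: first, since $x \subseteq y$ and intersections shrink as the index set grows, the correct relation is $J \subseteq I$, not the ``enlargement $J \supseteq I$'' you wrote; second, the assertion that the local subspace model is the same at $p$ and at $q$ is not automatic from the clean intersection hypothesis alone---you would need to argue (using the definition of $x^{\circ}$ and condition~(iv) of the intersection poset) that the collection of $N_{i}$ passing through a point of $x^{\circ}$, together with their local dimensions and incidence pattern, is independent of that point.
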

\begin{proof}
The two spaces in~\eqref{equation_p_q}
are both homeomorphic to $\link_{y}(x)$.
\end{proof}

\begin{proposition}
The $\cd$-indexes of the two quasi-graded posets
$T$ and $Q$ are equal, that is,
$\Psi(T,\rho_{T},\zetabar_{T}) = \Psi(Q,\rho_{Q},\zetabar_{Q})$.
\label{proposition_T_Q}
\end{proposition}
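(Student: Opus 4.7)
The strategy is to transform $T$ into $Q$ through a sequence of intermediate Whitney stratifications $T = T_0, T_1, \ldots, T_k = Q$, where each transition merges strata via Proposition~\ref{proposition_X_Y}. Because each such merge preserves the $\cd$-index, this will yield $\Psi(T) = \Psi(Q)$.

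Choose a linear extension $x_1, \ldots, x_k$ of $P \setminus \{\emptyset\}$ refining the partial order of $P$, so that $x_i <_P x_j$ implies $i < j$; equivalently, elements of higher dimension in $M$ are processed first. Starting from $T_0 = T$, inductively form $T_j$ from $T_{j-1}$ by applying Proposition~\ref{proposition_X_Y} pairwise to connected components of $x_j^{\circ}$ in $T_{j-1}$ until all are amalgamated into the single stratum $x_j^{\circ}$.

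The verification of the two hypotheses of Proposition~\ref{proposition_X_Y} for any pair of components $X_1, X_2$ of $x_j^{\circ}$ in $T_{j-1}$ proceeds as follows. For the up-sets, one shows that the strata strictly above a component $X$ of $x_j^{\circ}$ in $T_{j-1}$ are exactly $\{x_i^{\circ} : x_i <_P x_j\} \cup \{M\}$. The forward inclusion follows from $X \subseteq x_j \subseteq x_i$. For the reverse inclusion, any candidate stratum above $X$ is a component of some $y^{\circ}$ with $X \cap y \ne \emptyset$, and one must show that this forces $x_j \subseteq y$. At a point $p \in X \cap y$ the clean intersection property provides a local subspace-arrangement model with subspaces $V_{x_j}, V_y$; the alternative $V_y \subsetneq V_{x_j}$ would place $p$ inside a proper submanifold of $x_j$ that by condition~(iv) of the intersection-poset definition belongs to $P$, contradicting $X \subseteq x_j^{\circ}$, and so $V_{x_j} \subseteq V_y$. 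A clopen-and-connected argument on the connected manifold $X$ globalizes this to $X \subseteq y$, and one more application of condition~(iv) gives $x_j \subseteq y$. Hence the up-set is the same for both $X_1$ and $X_2$. For the links, Corollary~\ref{corollary_p_q} guarantees that $\link_{x_i^{\circ}}(p)$ depends only on the pair $(x_j, x_i)$ and not on the point $p \in x_j^{\circ}$, so the links of $X_1$ and $X_2$ to each upper stratum coincide, as does the link to $M$.

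After all $k$ steps, $T_k$ has exactly the strata $\{x^{\circ} : x \in P\} \cup \{M\}$, equipped with the same order relation, rank function, and weighted zeta function as $Q$; hence $T_k = Q$ as quasi-graded posets and $\Psi(T) = \Psi(Q)$. The main obstacle is the up-set uniformity, specifically the implication $X \cap y \ne \emptyset \Rightarrow x_j \subseteq y$ for a connected component $X$ of $x_j^{\circ}$ and $y \in P$: this requires combining the local subspace-arrangement picture supplied by the clean intersection property with the global combinatorial restriction imposed by condition~(iv), bridged by a connectedness argument on $X$.
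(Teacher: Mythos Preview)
Your argument is correct and follows the same strategy as the paper: merge the connected components of each $x_j^{\circ}$ via Proposition~\ref{proposition_X_Y}, processing a linear extension from high to low dimension, with Corollary~\ref{corollary_p_q} supplying the link condition. You supply more detail than the paper on the up-set condition~(i), which the paper leaves implicit; one small wording issue is that your local dichotomy should read ``$V_{x_j}\not\subseteq V_y$'' rather than ``$V_y\subsetneq V_{x_j}$'' (the two subspaces might be incomparable), but the same contradiction via conditions~(iii) and~(iv) handles that case as well.
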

\begin{proof}
Choose a linear extension of the poset $Q$, that is,
$Q = \{x_{1}, \ldots, x_{k}\}$ such that
$x_{i} \leq_{Q} x_{j}$ implies $i \leq j$.
Note that $x_{0} = \hz$ and $x_{k} = \ho = M$.
Starting with $x_{k-1}$ select two elements
$u$ and~$v$ such that $\psi(z(u)) = \psi(z(v)) = x_{k-1}$.
By Corollary~\ref{corollary_p_q},
the conditions of Proposition~\ref{proposition_X_Y}
are satisfied
and hence 
we can replace $u$ and $v$ by their union $u \cup v$
without changing the $\cd$-index.
Repeat this operation until there are no more
such elements. Continue with elements mapping
to $x_{k-2}$ and work towards the strata $x_{2}$.
Proposition~\ref{proposition_X_Y}
guarantees that at every step the $\cd$-index does not change.
The end result is the stratification~$Q$,
proving the proposition.
\end{proof}

\addtocounter{theorem}{-4}
\begin{continuation}
{\rm
The weighted zeta function of the quasi-graded poset $T$
takes the value $1$ everywhere, except for the
four values 
$\zetabar_{T}(\hz,A) = \zetabar_{T}(\hz,B)
= \zetabar_{T}(a,A) = \zetabar_{T}(b,B) = 0$.

Similarly, the non $1$-values of
the weighted zeta function for the quasi-graded poset $Q$
are given by
$\zetabar_{Q}(\hz,c) = \zetabar_{Q}(\hz,d)
= \zetabar_{Q}(\hz,e) = \zetabar_{Q}(\hz,f)
= \zetabar_{Q}(\hz,C)
= \zetabar_{Q}(c,e) = \zetabar_{Q}(c,f) = 2$,
$\zetabar_{Q}(c,C) = 4$,
$\zetabar_{Q}(d,C) = 0$
and
$\zetabar_{Q}(e,C) = \zetabar_{Q}(f,C) = 2$.
Observe that each of the five strata
$c$, $d$, $e$, $f$ and $C$
are disconnected.

In both cases we obtain
$\Psi(T) = \Psi(Q) = \mccc + 2 \cdot \mdc$.
}
\end{continuation}
\addtocounter{theorem}{3}

We now explicitly describe $\zetabar_{Q}$
in terms of the invariants $Z$ and $Z_{M}$.
\begin{proposition}
The weighted zeta function $\zetabar_{Q}$ is given by
\begin{align*}
\zetabar_{Q}(y^{\circ},\ho)
& = 
1
& & \text{ for }
\widehat{-1} <_{\PPP} y <_{\PPP} \ho, \\
\zetabar_{Q}(y^{\circ},x^{\circ})
& = 
Z([x,y],\rho_{\PPP},\zeta)
& & \text{ for }
\widehat{-1} <_{\PPP} x \leq_{\PPP} y <_{\PPP} \ho, \\
\zetabar_{Q}(\hz,x^{\circ})
& = 
Z_{M}([x,\ho],\rho_{\PPP},\zeta; \chi)
& & \text{ for }
\widehat{-1} <_{\PPP} x <_{\PPP} \ho, \\
\zetabar_{Q}(\hz,\ho)
& = 
\chi(M) .
\end{align*}
\label{proposition_Q}
\end{proposition}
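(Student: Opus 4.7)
The plan is to unpack the definition of $\zetabar_{Q}$ one pair $(x^{\circ},y^{\circ})$ at a time and match each resulting Euler characteristic with the prescribed combinatorial expression, proceeding from the easiest geometric observations to the main analytic step.

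Two cases are immediate or near-immediate. First, $\zetabar_{Q}(\hz,\ho) = \chi(\ho) = \chi(M)$ directly from the first branch of the definition. Second, for $\zetabar_{Q}(y^{\circ},\ho) = 1$, the element $\ho$ is the open top stratum of $M$: at a point $p \in y^{\circ} \subseteq \partial M$, a normal slice $N_{p}$ to $y^{\circ}$ in $M$ is a half-space submanifold of dimension $n-\dim y$, so the link $M \cap N_{p} \cap \partial B_{\epsilon}(p)$ is an open hemisphere, which is contractible and has Euler characteristic~$1$.

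For the identification $\zetabar_{Q}(\hz,x^{\circ}) = Z_{M}([x,\ho]_{\PPP}, \rho_{\PPP}, \zeta; \chi)$ I would essentially recycle the M\"obius inversion from the proof of Theorem~\ref{theorem_complement}. By definition $\zetabar_{Q}(\hz,x^{\circ}) = \chi(x^{\circ})$, and the disjoint decomposition $x = \bigsqcup_{z \geq_{\PPP} x} z^{\circ}$ together with additivity of $\chi_{c}$ gives $\chi_{c}(x^{\circ}) = \sum_{z \geq x} \mubar_{\PPP}(x,z) \cdot \chi_{c}(z)$ after M\"obius inversion. Converting $\chi_{c}$ back to $\chi$ via Proposition~\ref{proposition_sign} and using $\dim z - \dim x = -\rho_{\PPP}(x,z)$ yields exactly $Z_{M}([x,\ho]_{\PPP}, \rho_{\PPP}, \zeta; \chi)$.

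The main step is the evaluation of $\zetabar_{Q}(y^{\circ},x^{\circ}) = \chi(\link_{x^{\circ}}(y^{\circ}))$. The plan is to invoke the clean intersection property at a point $p \in y^{\circ}$, transport the arrangement by the local diffeomorphism to a linear subspace arrangement $\{V_{i}\}$ near the origin with $V_{y} = \bigcap V_{i}$, and take the normal slice $N_{p}$ to be a linear complement to $V_{y}$. Each $z \in [x,y]_{\PPP}$ corresponds to a linear subspace $V_{z}$ with $V_{y} \subseteq V_{z} \subseteq V_{x}$, and inspection of the formula defining $x^{\circ}$ shows that $\link_{x^{\circ}}(y^{\circ})$ is precisely the complement of the subspace arrangement $\{V_{z} \cap N_{p} : z \in (x,y]_{\PPP}\}$ on the sphere $V_{x} \cap N_{p} \cap \partial B_{\epsilon}(p)$ of dimension $\rho_{\PPP}(x,y) - 1$. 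Applying Theorem~\ref{theorem_sphere} to this local subspace arrangement then produces $Z([x,y]_{\PPP}, \rho_{\PPP}, \zeta)$. The principal obstacle in this step is the bookkeeping that identifies the intersection lattice of the local linear arrangement with the interval $[x,y]_{\PPP}$ carrying the inherited rank: one uses condition~(iii) of the intersection poset definition to ensure that the single connected component of each $z \in [x,y]_{\PPP}$ passing through $p$ corresponds to a unique element of $\PPP$, together with the identity $\rho_{\PPP}(x,z) = \dim V_{x} - \dim V_{z}$ that matches the rank in $\PPP$ with the codimension inside the ambient $V_{x} \cap N_{p}$.
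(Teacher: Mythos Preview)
Your proposal is correct and follows essentially the same route as the paper's proof: the paper dispatches the first and fourth identities as ``direct,'' obtains the second from $\chi(\link_{x^{\circ}}(y^{\circ})) = Z([x,y])$ via Theorem~\ref{theorem_sphere}, and obtains the third from $\chi(x^{\circ}) = Z_{M}([x,\ho];\chi)$ by the same M\"obius inversion underlying Theorem~\ref{theorem_complement}. Your write-up simply makes explicit the geometric input (clean intersection property, local linearization of the link, identification of the local intersection poset with $[x,y]_{P}$) that the paper leaves implicit when it cites Theorem~\ref{theorem_sphere}.
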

\begin{proof}
The first and fourth equations are direct.
The second equation follows from
\begin{align*}
\zetabar_{Q}(y^{\circ},x^{\circ})
& =
\chi(\link_{x^{\circ}}(y^{\circ}))
=
Z([x,y],\rho_{\PPP},\zeta) ,
\end{align*}
where the second equality is by
Theorem~\ref{theorem_sphere}.
Similarly, we have
\begin{align*}
\zetabar_{Q}(\hz,x^{\circ})
 & =
\chi(x^{\circ})
=
Z_{M}([x,\ho],\rho_{\PPP},\zeta; \chi) .
\qedhere
\end{align*}
\end{proof}

We are now positioned to prove our main result.
\begin{proof}[Proof of Theorem~\ref{theorem_main}.]
Throughout this proof we suppress
the dependency on the rank function~$\rho$
and the zeta function $\zeta$ in our notation.
By summing over all chains $c$
in the poset $\PPP$ we can compute the
$\ab$-index of $Q$ and 
hence of $T$.
However, we prefer to compute the reverse $\ab$-indexes,
that is,
\begin{align}
\Psi(T)^{*}
=
\Psi(Q)^{*}
& = 
\chi(M) \cdot (\av-\bv)^{n+1} 
\nonumber \\
&   
\quad +
\sum_{c}
        Z([x_{1},x_{2}])
           \cdots
        Z([x_{k-2},x_{k-1}])
           \cdot
        Z_{M}([x_{k-1},x_{k}]; \chi)  
\cdot
\wt(c)   ,
\label{equation_T}
\end{align}
where the sum is over all chains
$c = \{\widehat{-1} = x_{0} < x_{1} < \cdots < x_{k} = \ho\}$
of length $k \geq 2$ in $\PPP$.

Recall that
the operator $\eta$ satisfies the relation~\eqref{equation_eta}.
Similarly, 
define~$\eta_{M}$ such that
$$
\eta_{M}\big(\Psi(P)\big) 
  =
Z_{M}(P; \chi) \cdot (\av-\bv)^{\rho(P)-1}  , 
$$
where we suppress
the dependence of $\eta_{M}(\Psi(P))$
on the Euler characteristic of the elements in $P$.
By expanding equation~\eqref{equation_weight}
we can rewrite equation~\eqref{equation_T} as
\begin{align}
\Psi(T)^{*}
& = 
\chi(M) \cdot (\av-\bv)^{n+1}
\nonumber \\
&  +  
\sum_{c}
\kappa\big(\Psi([x_{0},x_{1}])\big)
\cdot \bv \cdot
\eta\big(\Psi([x_{1},x_{2}])\big)
\cdot \bv \cdots \bv \cdot
\eta\big(\Psi([x_{k-2},x_{k-1}])\big)
\cdot \bv \cdot
\eta_{M}\big(\Psi([x_{k-1},x_{k}])\big)
\nonumber \\
\label{equation_kappa_eta_eta_M}
\end{align}
We split the sum by first summing over the element $y=x_{k-1}$
and then over all chains $c^{\prime}$ in the interval $[\widehat{-1},y]$.
By the definition of the operator $\varphi$ 
and the fact that the $\ab$-index is a coalgebra homeomorphism,
we then have
\begin{equation}
\Psi(T)^{*}
=
\chi(M) \cdot (\av-\bv)^{n+1} 
+
\sum_{\widehat{-1} < y < \ho}
\varphi\big(\Psi([\widehat{-1},y])\big)
\cdot \bv \cdot
\eta_{M}\big(\Psi([y,\ho])\big)  .
\label{equation_T2}
\end{equation}
Now apply $\eta_{M}$ to the interval $[y,\ho]$.
We have
\begin{align}
\eta_{M}\big(\Psi([y,\ho])\big) 
 & = 
Z_{M}([y,\ho]; \chi) \cdot (\av-\bv)^{\rho(y,\ho)-1}
\nonumber \\
 & = 
\sum_{y \leq x \leq \ho}
  (-1)^{\rho(y,x)} \cdot \mu(y,x) \cdot (\av-\bv)^{\rho(y,x)-1}
\cdot \chi(x) \cdot (\av-\bv)^{\rho(x,\ho)}
\nonumber \\
 & = 
 \chi(y) \cdot (\av-\bv)^{\rho(y,\ho)-1}
 +
\sum_{y < x < \ho}
\overline{\lambda}\big(\Psi([y,x])\big)
\cdot \chi(x) \cdot (\av-\bv)^{\rho(x,\ho)} ,
\label{equation_eta_M}
\end{align}
where the term $x = \ho$ vanished
since the maximal element $\ho$ is the
empty set with $\chi(\ho) = 0$.
Substituting
equation~\eqref{equation_eta_M}
into equation~\eqref{equation_T2}
we have
\begin{align*}
\Psi(T)^{*}
& = 
\chi(M) \cdot (\av-\bv)^{n+1}
+
\sum_{\widehat{-1} < y < \ho}
\varphi\big(\Psi([\widehat{-1},y])\big)
\cdot \bv \cdot
 \chi(y) \cdot (\av-\bv)^{\rho(y,\ho)-1} \\
 &
\quad +
\sum_{\widehat{-1} < y < \ho}
\sum_{y < x < \ho}
\varphi\big(\Psi([\widehat{-1},y])\big)
\cdot \bv \cdot
\overline{\lambda}\big(\Psi([y,x])\big)
\cdot \chi(x) \cdot (\av-\bv)^{\rho(x,\ho)}  \\
& = 
\chi(M) \cdot (\av-\bv)^{n+1}
+
\sum_{\widehat{-1} < x < \ho}
\varphi\big(\Psi([\widehat{-1},x])\big)
\cdot \bv \cdot
 \chi(x) \cdot (\av-\bv)^{\rho(x,\ho)-1} \\
 &
\quad +
\sum_{\widehat{-1} < x < \ho}
\sum_{\widehat{-1} < y < x}
\varphi\big(\Psi([\widehat{-1},y])\big)
\cdot \bv \cdot
\overline{\lambda}\big(\Psi([y,x])\big)
\cdot \chi(x) \cdot (\av-\bv)^{\rho(x,\ho)}  \\
& = 
\chi(M) \cdot (\av-\bv)^{n+1} \\
&
\quad +
\sum_{\widehat{-1} < x < \ho}
\Big(
    \varphi\big(\Psi([\widehat{-1},x])\big) \cdot \bv
+
\sum_{\widehat{-1} < y < x}
    \varphi\big(\Psi([\widehat{-1},y])\big) 
              \cdot \bv \cdot       
\overline{\lambda}\big(\Psi([y,x])\big)
\cdot (\av-\bv)
\Big) \\
&
\hspace{20 mm}
\cdot \chi(x) \cdot (\av-\bv)^{\rho(x,\ho)-1} \\
  & = 
\chi(M) \cdot (\av-\bv)^{n+1}
+
\sum_{\widehat{-1} < x < \ho}
{\cal G}\big(\Psi([\widehat{-1},x])\big)
\cdot \chi(x) \cdot (\av-\bv)^{\rho(x,\ho)-1} ,
\end{align*}
where in the second step we changed the variable in
the second term and switched the order of summation in the third term.
The last step was to apply the fact
that the $\ab$-index is a coalgebra map and
the definition of the operator
${\cal G}$ to~$\Psi([\widehat{-1},x])$;
see equation~\eqref{equation_G_Psi}.

From the last sum we break out the case when
$x = \hz$, that is, the boundary of $M$.
Recall that ${\cal G}(1) = \bv$.
For $x > \hz$ we use 
that $\Psi([\widehat{-1},x]) = \av \cdot \Psi([\hz,x])$
and we can apply
Proposition~\ref{proposition_G}.
\begin{align*}
\Psi(T)^{*}
& = 
\chi(M) \cdot (\av-\bv)^{n+1}
+
\chi(\partial M) \cdot \bv \cdot (\av-\bv)^{n} \\
&
\quad +
\sum_{\hz < x < \ho}
1/2 \cdot \omega\big(\av \cdot \Psi([\hz,x]) \cdot \bv\big)
\cdot \chi(x) \cdot (\av-\bv)^{\rho(x,\ho)-1} ,
\end{align*}
The result follows now by observing
$\chi(\partial M) = \left(1 - (-1)^{\dim(M)}\right) \cdot \chi(M)$,
the Euler characteristic of 
an odd dimensional manifold $x$ is $0$
(since $x$ has no boundary),
and that $(\av-\bv)^{2} = \cv^{2} - 2\dv$.
\end{proof}

Similar to the $\cv$-$2\dv$-index in~\cite{Billera_Ehrenborg_Readdy}
we have the next result.
\begin{corollary}
Let $M$ be a manifold
and $\{N_{i}\}_{i=1}^{m}$ be a manifold
arrangement in the boundary of $M$, that is, $\partial M$.
Let $T$ be the induced Whitney stratification of $M$.
Let $w$ be a $\cd$-monomial containing $k$ $\dv$'s.
Then the coefficient of $w$ in the $\cd$-index $\Psi(T)$
is divisible by $2^{k-1}$.
\end{corollary}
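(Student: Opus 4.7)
The plan is to read the divisibility off directly from Theorem~\ref{theorem_main}. Since the involution $u \mapsto u^*$ fixes both $\cv$ and $\dv$, it preserves the number of $\dv$'s in each $\cd$-monomial, so it suffices to prove the claim for $\Psi(T)^*$ and to analyze the right-hand side of Theorem~\ref{theorem_main} summand by summand.

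The main observation is that each of the two building blocks appearing in that formula produces $\cd$-polynomials whose coefficients are divisible by $2^{\#\dv}$. Expanding $(\cv^2-2\dv)^j$ in the noncommuting letters $\cv^2$ and $-2\dv$, each monomial with $k$ occurrences of $\dv$ arises from a unique selection of which $k$ of the $j$ factors contribute $-2\dv$ (the intervening runs of $\cv$'s having even length are forced to pair up), and so carries coefficient $(-2)^k$, which is divisible by $2^k$. For the operator $\omega$, every $\dv$ in $\omega(u)$ arises from the substitution $\av\bv \mapsto 2\dv$ and thus carries a factor of $2$; hence for each $\ab$-monomial $u$, the polynomial $\omega(u)$ equals $2^j$ times a single $\cd$-monomial with $j$ $\dv$'s. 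Since $\Psi([\hz,x]) \in \zab$ has integer coefficients, every coefficient of $\omega(\av \cdot \Psi([\hz,x]) \cdot \bv)$ is likewise divisible by $2^{\#\dv}$.

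Applying these facts to Theorem~\ref{theorem_main}, the first summand $\chi(M)$ times $(\cv^2-2\dv)^{n/2}$ (or $\chi(M) \cdot \cv \cdot (\cv^2-2\dv)^{(n-1)/2}$ when $n$ is odd) contributes to each $\cd$-monomial with $k$ $\dv$'s a coefficient divisible by $2^k$, which exceeds the required $2^{k-1}$. In the second summand, a $\cd$-monomial with $k$ $\dv$'s in the product $\omega(\av \cdot \Psi([\hz,x]) \cdot \bv) \cdot (\cv^2-2\dv)^{\dim(x)/2}$ factors as the product of a monomial with $j$ $\dv$'s from the $\omega$-factor and a monomial with $\ell$ $\dv$'s from the power factor, with $j+\ell = k$; thus the coefficient is divisible by $2^j \cdot 2^\ell = 2^k$. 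Multiplying by the integer $\chi(x)$ and by the scalar $1/2$ yields a contribution divisible by $2^{k-1}$, and summing these contributions preserves divisibility. No genuine obstacle arises; the argument is essentially a careful bookkeeping of the factors of $2$ produced by $\omega$ and by the binomial power $(\cv^2-2\dv)^j$, with the single factor of $1/2$ out front accounting for the drop from $2^k$ to $2^{k-1}$.
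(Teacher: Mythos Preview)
Your argument is correct and is precisely the reasoning the paper intends: the corollary is stated immediately after Theorem~\ref{theorem_main} without proof, with only the remark that it is analogous to the $\cv$-$2\dv$-index of~\cite{Billera_Ehrenborg_Readdy}, so the expected justification is exactly the bookkeeping of powers of $2$ in $\omega$ and in $(\cv^{2}-2\dv)^{j}$ that you carry out. Your observation that the involution $u\mapsto u^{*}$ preserves the number of $\dv$'s, allowing you to work with $\Psi(T)^{*}$, is the natural way to reduce to the displayed formula.
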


\section{Spherical and toric arrangements}
\label{section_sphere_torus}

We now turn our attention to consequences of the main result.
As mentioned in the introduction,
we consider two important cases that have
been studied earlier, namely spherical and toric arrangements.
In both cases, the results have been expanded.

\subsection{Spherical arrangements}

We now extend the original results
for spherical arrangements where each sphere
has codimension~$1$ to arrangements without
this restriction.
By a $k$-dimensional sphere we mean a manifold
homeomorphic to the $k$-dimensional unit sphere
$\{(x_{1}, \ldots, x_{k+1}) \in \Rrr^{k+1} 
    \: : \: x_{1}^{2} + \cdots + x_{k+1}^{2} = 1\}$.
Define a {\em spherical arrangement} to be
a collection of spheres 
$\{N_{i}\}_{i=1}^{m}$
on the boundary of a ball
such that the intersection
$\bigcap_{i \in I} N_{i}$ 
is a disjoint union of spheres
for all index sets $I \subseteq \{1, \ldots, m\}$.
An example of a spherical arrangement is to intersect
an arrangement of affine subspaces in
Euclidean space with a sphere
of large enough radius.
Example~\ref{example_different_dimensions},
where two $2$-dimensional spheres
intersect in two points and a circle,
is also a spherical arrangement.

Observe that we require every non-empty element
of an intersection poset to be a sphere.
The next result not only extends the original result
in~\cite[Theorem~3.1]{Billera_Ehrenborg_Readdy},
but also~\cite[Theorem~4.10]{Ehrenborg_Readdy_Slone}.

\begin{theorem}
Let $\{N_{i}\}_{i=1}^{m}$ be a spherical
arrangement in the boundary of an $n$-dimensional ball,
with~$(P,\rho,\zeta)$ as a quasi-graded intersection poset of the
arrangement.
Let $T$ be the induced Whitney stratification.
Then the $\cd$-index of $T$ is given by
\begin{align*}
    \Psi(T)
  & = 
    \omega\big(\av \cdot \Psi(P,\rho,\zeta)\big)^{*}  .
\end{align*}
\label{theorem_spherical}
\end{theorem}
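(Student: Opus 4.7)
The plan is to apply Theorem~\ref{theorem_main} with $M$ chosen as the $n$-dimensional ball, so that $\partial M = S^{n-1}$ is the ambient sphere of the arrangement. For this choice, $\chi(M) = 1$ and every non-empty element $x$ of the intersection poset is a sphere with $\chi(x) = 1 + (-1)^{\dim(x)}$, which equals $2$ on even-dimensional elements and vanishes on odd-dimensional ones.

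Rather than manipulating the final form of Theorem~\ref{theorem_main} directly, I would re-enter its proof at the intermediate stage where
\[
\Psi(T)^{*}
=
\chi(M)(\av-\bv)^{n}
+
\sum_{\widehat{-1}<y<\ho}
\varphi\!\bigl(\Psi([\widehat{-1},y])\bigr)\cdot\bv\cdot\eta_{M}\!\bigl(\Psi([y,\ho])\bigr).
\]
The crucial observation is that in the spherical setting, $\eta_{M}$ reduces to $\eta$ on every interval $[y,\ho]$ with $y<\ho$; equivalently, $Z_{M}([y,\ho];\chi) = Z([y,\ho])$. This follows from the same Zaslavsky-type manipulation used in the proof of Theorem~\ref{theorem_sphere}: substituting $\chi(x) = 1 + (-1)^{\dim(x)}$ for $x<\ho$, using the parity identities $\dim(x) = (n-1) - \rho(x)$ and $\rho(y,\ho) = \dim(y)+1$, together with $\sum_{y\le x\le\ho}\mu(y,x) = 0$, produces two correction sums that cancel exactly, yielding the full Zaslavsky invariant $Z([y,\ho])$.

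With $\chi(M)=1$ and $\eta_{M}$ replaced by $\eta$, the displayed formula becomes
\[
\Psi(T)^{*}
=
(\av-\bv)^{n}
+
\sum_{\widehat{-1}<y<\ho}
\varphi\!\bigl(\Psi([\widehat{-1},y])\bigr)\cdot\bv\cdot\eta\!\bigl(\Psi([y,\ho])\bigr).
\]
By the coalgebra map property (Theorem~\ref{theorem_coalgebra_map}) together with the definition of $\varphi_{k}$, grouping chains in $\PPP$ of length $k\ge 2$ by their penultimate element identifies this sum with $\sum_{k\ge 2}\varphi_{k}(\Psi(\PPP))$. Since $\varphi_{1}(\Psi(\PPP)) = \kappa(\Psi(\PPP)) = (\av-\bv)^{n}$ by~\eqref{equation_kappa}, the leading $(\av-\bv)^{n}$ cancels, leaving $\Psi(T)^{*} = \varphi(\Psi(\PPP))$. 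A direct decomposition of chains in $\PPP$ according to whether they pass through $\hz$ gives $\Psi(\PPP) = \av\cdot\Psi(P)$, and the Billera--Ehrenborg--Readdy identity $\varphi(\av\cdot v) = \omega(\av\cdot v)$ (recalled in Section~\ref{section_coalgebra}) converts the right-hand side to $\omega(\av\cdot\Psi(P))$. Reversing produces the claim.

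The main obstacle is the sign bookkeeping in the reduction $Z_{M}([y,\ho];\chi)=Z([y,\ho])$; once this spherical identity is in hand, the remainder is a careful assembly of coalgebraic identities already developed in Section~\ref{section_coalgebra}.
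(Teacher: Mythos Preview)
Your approach is essentially the paper's own: the key step in both is the identity $Z_{M}([y,\ho];\chi)=Z([y,\ho])$ for spheres (the same computation as in Theorem~\ref{theorem_sphere}), so that $\eta_{M}$ may be replaced by $\eta$ and the chain sum collapses to $\varphi\bigl(\Psi(\PPP)\bigr)=\omega\bigl(\av\cdot\Psi(P)\bigr)$. The only difference is cosmetic: the paper re-enters at the fully expanded equation~\eqref{equation_kappa_eta_eta_M} and says in one line that it ``reduces to $\varphi(\Psi(\PPP))$'', whereas you re-enter one step later at~\eqref{equation_T2} and spell out that the $\chi(M)(\av-\bv)^{n}$ term supplies the missing $\varphi_{1}$ summand (your word ``cancels'' is slightly misleading---it is absorbed, not subtracted---but the arithmetic is correct).
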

\begin{proof}
Observe that the Zaslavsky invariant and the
manifold Zaslavsky invariant agree for a
spherical arrangements. That is, for an
interval $[y,\ho]$ in $P$ we have 
\begin{align*}
Z_{M}([y,\ho])
& = 
       \sum_{y \leq x \leq \ho}
                  (-1)^{\rho(y,x)}
               \cdot
                   \mu(y,x)
               \cdot
                   \left(1+(-1)^{\dim(x)}\right)   \\
& = 
       \sum_{y \leq x \leq \ho}
                  (-1)^{\rho(y,x)}
               \cdot
                   \mu(y,x)  \\
& = 
Z([y,\ho])  ,
\end{align*}
where the equality in the second step
follows from the fact the product
$(-1)^{\rho(y,x)} \cdot (-1)^{\dim(x)}$
does not depend
on the element $x$ and thus the term
$\sum_{y \leq x \leq \ho} \mu(y,x)$
vanishes.
Hence the two operators $\eta_{M}$ and $\eta$ agree,
and equation~\eqref{equation_kappa_eta_eta_M}
reduces to
$\Psi(T)^{*} = \varphi\big(\Psi(\PPP)\big) 
  = \omega\big(\av \cdot \Psi(P)\big)$.
\end{proof}

\addtocounter{theorem}{-1}
\addtocounter{section}{-1}
\begin{continuation}
{\rm
The spherical intersection poset $P$ has the flag $\fbar$-vector
$\fbar(\emptyset) = 1$ and
$\fbar(\{1\}) = \fbar(\{2\}) = \fbar(\{1,2\}) = 2$.
Hence the flag $\hhbar$-vector
is given by
$\hhbar(\emptyset) = \hhbar(\{1\}) = \hhbar(\{2\}) = 1$
and
$\hhbar(\{1,2\}) = -1$.
Thus
$\Psi(P) = \maa + \mba + \mab - \mbb$
and
$\Psi(T) = \omega(\av \cdot (\maa + \mba + \mab - \mbb))^{*}
= \mccc + 2 \cdot \mdc$.
}
\end{continuation}
\addtocounter{section}{1}
\addtocounter{theorem}{0}

\begin{example}
{\rm
Let $k$ be a non-negative integer.
Let $c$ and $p$ be positive real numbers such that
$c < p$ and $c + p < \pi/2$. On the $2$-dimensional
unit sphere $S^{2}$ consider the following two closed curves,
which intersect each other in $2 \cdot k$ points:
$$
\phi = c + p \cdot \sin(k \cdot \theta)
\:\:\:\: \text{ and } \:\:\:\:
\phi = -c - p \cdot \sin(k \cdot \theta)  .
$$
Observe that the arrangement of these two curves
on the sphere $S^{2}$ is spherical.
Furthermore, there are
$(2k-1)!! = (2k-1) \cdot (2k-3) \cdots 1$
ways to divide the points into $k$ zero-dimensional spheres.
However, all intersection posets are isomorphic.
The spherical intersection poset has rank $3$ and consists of
$2$ atoms and $k$ coatoms, where each coatom covers
each atom.
The $\ab$-index is given by
$\Psi(P) = (\av+\bv) \cdot (\av+(k-1) \cdot \bv)$.
The $\cd$-index of the induced subdivision of the sphere
is given by
\begin{align*}
\Psi(T)
  & = 
\omega( \av \cdot (\av+\bv) \cdot (\av+(k-1) \cdot \bv) )^{*} \\
  & = 
\omega( \av\av\av )^{*}
  +
(k-1) \cdot \omega( \av\av\bv )^{*}
  +
\omega( \av\bv \cdot (\av+(k-1) \cdot \bv) )^{*} \\
  & = 
\cv^{3}
  +
2(k-1) \cdot \dv\cv
  +
2k \cdot \cv\dv .
\end{align*}
This can be observed directly in the case $k \geq 1$
by noting that the induced subdivision
consists of $2k$ vertices, $2k$ digons and two $2k$-gons.
Note that the calculation also holds for the case $k=0$.
}
\end{example}

\begin{example}
{\rm
Given a complete flag of subspaces
$V_{1} \subseteq V_{2} \subseteqdots V_{n-1}$
in $\Rrr^{n}$ such that
$\dim(V_{i}) = i$,
consider the spherical arrangement in the $(n-1)$-dimensional
sphere $S^{n-1}$ given by
$\{V_{i} \cap S^{n-1}\}_{1 \leq i \leq n-1}$.
The intersection poset is a chain of length $n$,
and its $\ab$-index is $\av^{n-1}$.
The induced stratification consists of two cells
for each dimension and cells of different dimensions
are comparable.
Hence the face poset is the
butterfly poset of rank $n+1$, which has the
$\cd$-index~$\cv^{n}$.
This checks with
$\omega(\av \cdot \av^{n-1}) = \cv^{n}$.
}
\label{example_complete_flag}
\end{example}

Lastly, we observe that the proof of
Theorem~\ref{theorem_spherical}
only used that the Euler characteristic
of each manifold was that of a sphere.
Hence we have the following direct
extension.
\begin{theorem}
Let $M$ be a manifold with Euler characteristic $1$.
Let $\{N_{i}\}_{i=1}^{m}$ be a manifold arrangement
with an intersection poset $P$ such that each
non-empty element $x$ of $P$ satisfies
$\chi(x) = 1 + (-1)^{\dim(x)}$. 
Then the $\cd$-index of the induced Whitney stratification $T$
is given by
\begin{align*}
    \Psi(T)
  & = 
    \omega\big(\av \cdot \Psi(P,\rho,\zeta)\big)^{*}  .
\end{align*}
\end{theorem}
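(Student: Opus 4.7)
My plan is to mimic the proof of Theorem~\ref{theorem_spherical} essentially verbatim. The hypotheses abstract precisely the two ingredients used there: $\chi(M) = 1$ and the spherical Euler-characteristic formula $\chi(x) = 1 + (-1)^{\dim(x)}$ for every non-empty $x \in P$.

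The key step is to verify that $Z_M([y, \ho_{\PPP}]; \chi) = Z([y, \ho_{\PPP}])$ for every $y$ with $\widehat{-1} < y < \ho_{\PPP}$. Expanding
$Z_M([y,\ho_{\PPP}]; \chi) = \sum_{y \leq x \leq \ho_{\PPP}} (-1)^{\rho(y,x)} \mu(y,x) \chi(x)$,
the $x = \ho_{\PPP}$ term drops since $\chi(\emptyset) = 0$, and on the remaining terms the hypothesis lets us write $\chi(x) = 1 + (-1)^{\dim(x)}$. This splits $Z_M$ into two sums. Because $\rho(x) + \dim(x)$ is a constant (equal to $\dim(M)$) on non-empty elements of $P$, the sign $(-1)^{\rho(y,x)+\dim(x)}$ factors out of the second sum, which becomes a constant multiple of $\sum_{y \leq x < \ho_{\PPP}} \mu(y,x) = -\mu(y, \ho_{\PPP})$. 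A short sign check then shows that this contribution exactly supplies the missing $x = \ho_{\PPP}$ term of the first sum, producing $Z([y, \ho_{\PPP}])$, as in the spherical case.

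Granting $Z_M = Z$ on every such interval, the operators $\eta_M$ and $\eta$ agree on each $\Psi([y,\ho])$ appearing in equation~\eqref{equation_kappa_eta_eta_M}, while the hypothesis $\chi(M) = 1$ identifies the leading term $\chi(M) \cdot (\av-\bv)^{n+1}$ with $\varphi_{1}(\Psi(\PPP)) = \kappa(\Psi(\PPP))$. Consequently the right-hand side of \eqref{equation_kappa_eta_eta_M} assembles into $\varphi(\Psi(\PPP))$. Using the identity $\Psi(\PPP) = \av \cdot \Psi(P)$ (a routine bookkeeping obtained by splitting chains of $\PPP$ according to whether they pass through $\hz_P$) together with the relation $\varphi(\av \cdot v) = \omega(\av \cdot v)$ quoted from \cite{Billera_Ehrenborg_Readdy} in Section~\ref{section_coalgebra}, one concludes $\Psi(T)^{*} = \omega(\av \cdot \Psi(P))$, and reversing gives the claimed formula. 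No genuine obstacle arises, since the hypotheses have been engineered so that the argument of Theorem~\ref{theorem_spherical} applies unchanged; the one delicate moment is the $Z_M = Z$ cancellation, which rests on the interplay between the assumed Euler-characteristic values on non-empty strata and the vanishing of $\chi(\emptyset)$ at the top.
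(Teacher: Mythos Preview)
Your proposal is correct and follows exactly the route the paper takes: the paper's own ``proof'' is simply the one-sentence observation that the argument for Theorem~\ref{theorem_spherical} uses only the Euler-characteristic values, and you have carried that out in detail. Your handling of the $Z_{M}=Z$ step differs cosmetically from the paper's (you drop the $x=\ho$ term first and then recover it, whereas the paper keeps it throughout via the convention $\dim(\emptyset)=-1$), but the computation is the same.
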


\subsection{Toric arrangements}

We call an affine subspace $V$ in $\Rrr^{n}$
{\em rational} if in the linear system that determines
$V = \{\vec{x} \: : \: A \cdot \vec{x} = \vec{b}\}$
the matrix $A$ has rational entries.
Observe that under the quotient map
$\Rrr^{n} \longrightarrow \Rrr^{n}/\Zzz^{n} = T^{n}$
a rational affine subspace is sent to a subtorus of~$T^{n}$.
Call an affine subspace arrangement $\{V_{i}\}_{i=1}^{m}$ 
in $\Rrr^{n}$ {\em rational} if each subspace is
rational.
A {\em toric arrangement} is the image
of a rational affine subspace arrangement 
under the quotient map.

Define the map $H^{\prime} : \zab \longrightarrow \zab$
by $H^{\prime}(1) = 0$, and 
$H^{\prime}(u \cdot \av) = H^{\prime}(u \cdot \bv) = u$.
Similar to~\cite[Proof of Proposition~8.2]{Billera_Ehrenborg_Readdy}
(see also~\cite[Proposition~3.14]{Ehrenborg_Readdy_Slone})
we have
\begin{equation}
H^{\prime}\big(\Psi(P,\rho,\zeta)\big)
=
\sum_{x \text{ coatom of } P}
\Psi([\hz,x],\rho,\zeta) .
\label{equation_coatom}
\end{equation}

Theorem~\ref{theorem_main} implies the following.

\begin{theorem}
The $\cd$-index induced by a toric arrangement
on the $n$-dimensional torus $T^{n}$, for $n \geq 2$,
is given by
$$
\Psi(T)
=
\frac{1}{2} \cdot
\omega\big(\av \cdot H^{\prime}\big(\Psi(P,\rho,\zeta)\big) \cdot \bv\big)^{*}  ,
$$
where $(P,\rho,\zeta)$ is a quasi-graded
intersection poset of the toric arrangement.
\label{theorem_toric}
\end{theorem}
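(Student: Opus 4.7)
The plan is to derive the result as a direct specialization of Theorem~\ref{theorem_main} applied to $M = T^{n}$, exploiting the vanishing of Euler characteristics of tori of positive dimension together with the identity~\eqref{equation_coatom}.

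First, since $\chi(T^{n}) = 0$ for $n \geq 1$, the leading $\chi(M)$ summand in Theorem~\ref{theorem_main} vanishes. We are then left with
$$\Psi(T)^{*} = \sum_{\substack{x \in P \\ x > \hz,\ \dim(x) \text{ even}}} \frac{1}{2} \cdot \omega\bigl(\av \cdot \Psi([\hz,x]) \cdot \bv\bigr) \cdot (\cv^{2} - 2\dv)^{\dim(x)/2} \cdot \chi(x).$$
I next examine each element $x > \hz$ of $P$. Following the convention of~\cite{Ehrenborg_Readdy_Slone} adopted for toric arrangements (namely, choosing $P$ so that each element corresponds to a single connected component of an intersection of the $N_{i}$'s), the element $x$ is a subtorus of $T^{n}$ of dimension $\dim(x)$. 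Hence $\chi(x) = 0$ unless $\dim(x) = 0$, in which case $x$ is a single point with $\chi(x) = 1$. The surviving elements are precisely the coatoms of $P$, at which $(\cv^{2} - 2\dv)^{\dim(x)/2} = 1$.

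These reductions yield, by linearity of $\omega$,
$$\Psi(T)^{*} = \frac{1}{2} \sum_{x \text{ coatom of } P} \omega\bigl(\av \cdot \Psi([\hz,x]) \cdot \bv\bigr) = \frac{1}{2} \cdot \omega\!\left(\av \cdot \sum_{x \text{ coatom of } P} \Psi([\hz,x]) \cdot \bv\right).$$
Now equation~\eqref{equation_coatom} rewrites the inner sum as $H^{\prime}(\Psi(P,\rho,\zeta))$, giving
$$\Psi(T)^{*} = \frac{1}{2} \cdot \omega\bigl(\av \cdot H^{\prime}(\Psi(P,\rho,\zeta)) \cdot \bv\bigr),$$
and applying the involution $*$ to both sides produces the formula of the theorem.

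The argument is essentially bookkeeping once Theorem~\ref{theorem_main} and equation~\eqref{equation_coatom} are in hand; the main point requiring care is verifying that the chosen intersection poset assigns each zero-dimensional (coatom) element to a single point, so that $\chi(x) = 1$ for each coatom and the multiplicity in the sum exactly matches the coefficient introduced by equation~\eqref{equation_coatom}. With a different choice of intersection poset grouping several points into one $0$-dimensional element with $\chi(x) > 1$, one would instead need a weighted version of $H^{\prime}$ tracking these multiplicities.
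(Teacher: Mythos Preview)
Your overall approach---specializing Theorem~\ref{theorem_main}, noting that only zero-dimensional intersection elements survive, and invoking equation~\eqref{equation_coatom}---is exactly the paper's. However, there is a genuine gap in how you invoke Theorem~\ref{theorem_main}.

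That theorem concerns a manifold $M$ \emph{with boundary}, the arrangement living in $\partial M$, and the term that must vanish is $\chi(M)$, not $\chi(\partial M)$. The torus $T^{n}$ is closed, so taking ``$M = T^{n}$'' does not fit the framework: there is no boundary in which to place the arrangement, and the quantity $\chi(T^{n})$ is simply not what appears in the formula. The paper instead realizes $T^{n}$ as the boundary of the $(n{+}1)$-manifold $M = B^{2} \times T^{n-1}$, whence $\chi(M) = \chi(B^{2}) \cdot \chi(T^{n-1}) = 1 \cdot 0 = 0$ for $n \geq 2$. Note that for $n = 1$ this computation gives $\chi(M) = 1$, which is precisely why the hypothesis $n \geq 2$ appears; your appeal to $\chi(T^{n}) = 0$ for all $n \geq 1$ fails to explain that restriction and would wrongly suggest the formula extends to $n = 1$.

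Once you supply this missing step, the remainder of your argument goes through verbatim and coincides with the paper's. Your closing observation---that one must choose the intersection poset so that each coatom is a single point in order for the unweighted identity~\eqref{equation_coatom} to match the $\chi(x)$-weighted sum coming from Theorem~\ref{theorem_main}---is a genuinely careful point that the paper leaves implicit.
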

\begin{proof}
Let $B^{2}$ denote the two-dimensional ball,
that is, a disc. Observe that
the $n$-dimensional torus~$T^{n} = T^{1} \times T^{n-1}$ is the boundary
of $M = B^{2} \times T^{n-1}$.
Also note that
the Euler characteristic is given by $\chi(M) = 0$
for $n \geq 2$. Hence the first term of 
Theorem~\ref{theorem_main} vanishes.
Next, observe that the only elements in the intersection
poset with a non-zero Euler characteristic are the points.
Hence the sum in
Theorem~\ref{theorem_main} reduces to
$$
\Psi(T)
=
          \frac{1}{2} \cdot
      \sum_{x \text{ coatom of } P}
          \omega\big(\av \cdot \Psi([\hz,x]) \cdot \bv\big)^{*}  ,
$$
that is, a sum over all coatoms in the intersection poset $P$.
Since $w \longmapsto \omega(\av \cdot w \cdot \bv)$ is linear,
by equation~\eqref{equation_coatom} the sum reduces further
to the statement of the theorem.
\end{proof}

\begin{remark}
{\rm
Theorem~\ref{theorem_toric}
strengthens~\cite[Theorem~3.12]{Ehrenborg_Readdy_Slone}
in a number of directions.
First,
the previous result was only proved for an arrangement
of codimension one tori, that is, the image of a rational
(affine) hyperplane arrangement.
This is no longer the case.
Secondly,~\cite{Ehrenborg_Readdy_Slone} required that
the toric arrangement induced
a regular subdivision of the torus.
Again, we no longer require the regularity condition.
Lastly, the previous result computes the $\ab$-index of
the face poset.
This is not an Eulerian poset. However, it is almost
an Eulerian quasi-graded poset with the weighted zeta function
given by the classical zeta function~$\zeta$.
We only have to change the value of 
$\zetabar(\hz,\ho)$ to $\chi(M) = 0$ to
make it Eulerian.
This explains why the earlier result
in~\cite{Ehrenborg_Readdy_Slone}
had an extra~$(\av-\bv)^{n+1}$ term.
}
\end{remark}

Similar to Example~\ref{example_complete_flag},
we have a toric analogue.
\begin{example}
{\rm
Let
$V_{0} \subseteq V_{1} \subseteqdots V_{n-1}$
be a complete flag of rational affine subspaces
in~$\Rrr^{n}$ such that $\dim(V_{i}) = i$.
Consider the toric arrangement 
obtained by the quotient map.
The intersection poset is a chain of length $n+1$
whose $\ab$-index is~$\av^{n}$. 
Hence the $\cd$-index of the face poset is 
given by
$1/2 \cdot \omega(\av \cdot \av^{n-1} \cdot \bv)^{*} = \dv \cv^{n-1}$.
}
\label{example_toric_complete_flag}
\end{example}

\section{Concluding remarks}

The next natural step is to ask for inequalities for
the $\cd$-coefficients of arrangements.
For the classical case of central hyperplane arrangements,
equivalently for zonotopes, the $\cd$-index is minimized
on the $n$-dimensional cube;
see~\cite[Corollaries~7.5 and~7.6]{Billera_Ehrenborg_Readdy}.
This was continued by Nyman and Swartz~\cite{Nyman_Swartz}
in special cases. Ehrenborg's lifting technique
for polytopes~\cite{Ehrenborg_lifting}
was extended to zonotopes; see~\cite{Ehrenborg_zonotopes}.
However, the question 
of determining inequalities
is wide open for general manifold arrangements.
As a first step one should consider 
arrangements with codimension one submanifolds.

For results concerning inequalities
and unimodality of the $f$-vector
of zonotopes, see~\cite{Fukuda_Saito_Tamura,Fukuda_Saito_Tamura_Tokuyama,Fukuda_Tamura_Tokuyama}.
The minimal and maximal
number of connected components of arrangements
has been studied
by Shnurnikov~\cite{Shnurnikov}
in the cases of Euclidean, projective
and Lobachevski{\u{\i}} spaces.
Moci~\cite{Moci} introduced a generalized Tutte polynomial
for spherical and toric arrangements. It would be interesting
to develop a similar polynomial for manifold arrangements.
Is there a natural subclass of manifold arrangements where
this would be successful?

\section*{Acknowledgements}

The authors thank Mark Goresky for many helpful
discussions
and the referees for their careful comments.
The first author also thanks the Institute for Advanced Study
for a productive research visit in May 2012.
The first author was partially supported by
National Science Foundation grant DMS 0902063.
This work was partially supported by a grant from the Simons Foundation (\#206001 to Margaret~Readdy).

\newcommand{\journal}[6]{{\sc #1,} #2, {\it #3} {\bf #4} (#5), #6.}
\newcommand{\preprint}[3]{{\sc #1,} #2, preprint~#3.}
\newcommand{\book}[4]{{\sc #1,} #2, #3, #4.}


\begin{thebibliography}{19}


\bibitem{Bayer_Billera}
\journal{M.\ M.\ Bayer and L.\ J.\ Billera}
            {Generalized Dehn-Sommerville relations
             for polytopes, spheres and
             Eulerian partially ordered sets}
            {Invent.\ Math.}
            {79}{1985}{143--157}


\bibitem{Bayer_Klapper}
\journal{M.\ Bayer and A.\ Klapper}
            {A new index for polytopes}
            {Discrete Comput.\ Geom.}
            {6}{1991}{33--47}

\bibitem{Bayer_Sturmfels}
\journal{M.\ Bayer and B.\ Sturmfels}
            {Lawrence polytopes}
            {Canad.\ J.\ Math.}
            {42}{1990}{62--79}

\bibitem{Billera_Ehrenborg_Readdy}
\journal{L.\ Billera, R.\ Ehrenborg and M.\ Readdy}
            {The $\cv$-$2\dv$-index of oriented matroids}
            {J.\ Combin.\ Theory Ser.\ A}
            {80}{1997}{79--105}

\bibitem{OM}
{\sc A.\ Bj\"orner, M.\ Las Vergnas, B.\ Sturmfels,
N.\ White and G.\ M.\ Ziegler,}
Oriented matroids. Second edition,
Encyclopedia of Mathematics and its Applications, 46,
Cambridge University Press, Cambridge, 1999.

\bibitem{Bott}
\journal{R.\ Bott}
            {On the iteration of closed geodesics and
             the Sturm intersection theory}
            {Comm.\ Pure Appl.\ Math.}
            {9}{1956}{171--206}

\bibitem{dAntonio_Delucchi}
            {\sc G.\ d'Antonio and E.\ Delucchi}
            {A Salvetti complex for toric arrangements
               and its fundamental group}
            {\it Int.\ Math.\ Res.\ Not.\ IMRN}
            (2012), 3535--3566. 

\bibitem{De_Concini_Procesi}
\journal{C.\ De Concini and C.\ Procesi}
            {On the geometry of toric arrangements}
            {Transform.\ Groups}
            {10}{2005}{387--422}


\bibitem{du_Plessis_Wall}
\book{A.\ du Plessis and T.\ Wall}
     {The Geometry of Topological Stability}
     {London Mathematical Society Monographs. New Series, 9.
      Oxford Science Publications. The Clarendon Press,
      Oxford University Press, New York}
     {1995}

\bibitem{Ehrenborg_lifting}
\journal{R.\ Ehrenborg}
            {Lifting inequalities for polytopes}
            {Adv.\ Math.}
            {193}{2005}{205--222}

\bibitem{Ehrenborg_zonotopes}
             {\sc R.\ Ehrenborg}
             {Inequalities for zonotopes.}
             {Combinatorial and computational geometry,}
             {277--286,}
             {Math.\ Sci.\ Res.\ Inst.\ Publ.,}
             {\bf 52},
             {Cambridge Univ.\ Press, Cambridge, 2005.}


\bibitem{Ehrenborg_Goresky_Readdy}
\preprint{R.\ Ehrenborg, M.\ Goresky and M.\ Readdy}
             {Euler flag enumeration of Whitney stratified spaces}
             {2011}
              arXiv:1201.3377

\bibitem{Ehrenborg_Readdy}
\journal{R.\ Ehrenborg  and M.\ Readdy}
            {Coproducts and the {\bf cd}-index}
            {J.\ Algebraic Combin.}
            {8}{1998}{273--299}

\bibitem{Ehrenborg_Readdy_balanced}
\preprint{R.\ Ehrenborg  and M.\ Readdy}
             {The $\cd$-index of Bruhat and balanced graphs}
             {2012}

\bibitem{Ehrenborg_Readdy_Slone}
\journal{R.\ Ehrenborg, M.\ Readdy and M.\ Slone}
            {Affine and toric arrangements}
            {Discrete Comput.\ Geom.}
            {41}{2009}{481--512}

\bibitem{Forge_Zaslavsky}
\journal{D.\ Forge and T.\ Zaslavsky}
            {On the division of space by topological hyperplanes}
            {European J.\ Combin.}
            {30}{2009}{1835--1845}

\bibitem{Fukuda_Saito_Tamura}
\journal{K.\ Fukuda, S.\ Saito and A.\ Tamura}
            {Combinatorial face enumeration in arrangements
              and oriented matroids}
             {Discrete Appl.\ Math.}
             {31}{1991}{141--149}

\bibitem{Fukuda_Saito_Tamura_Tokuyama}
\journal{K.\ Fukuda, S.\ Saito, A.\ Tamura, and T.\ Tokuyama}
            {Bounding the number of $k$-faces in
              arrangements of hyperplanes}
            {Discrete Appl.\ Math.}
            {31}{1991}{151--165}

\bibitem{Fukuda_Tamura_Tokuyama}
\journal{K.\ Fukuda, A.\ Tamura and T.\ Tokuyama}
            {A theorem on the average number of subfaces
             in arrangements and oriented matroids}
            {Geom.\ Dedicata}
            {47}{1993}{129--142}

\bibitem{Gibson_Wirthmuller_du_Plessis_Looijenga}
\book{C.\ G.\ Gibson, K.\ Wirthm\"uller,
      A.\ du Plessis and E.\ J.\ N.\ Looijenga}
     {Topological Stability of Smooth Mappings}
     {Lecture Notes in Mathematics, Vol.\ 552.
        Springer-Verlag, Berlin-New York}
     {1976}

\bibitem{Goresky_MacPherson}
\book{M.\ Goresky and R.\ MacPherson}
     {Stratified Morse Theory, 
     Ergebnisse der Mathematik und ihrer Grenzgebiete (3) 
     [Results in Mathematics and Related Areas (3)], 14}
     {Springer-Verlag, Berlin}
     {1988}

\bibitem{Gusein-Zade}
\journal{S.\ M.\ Gusein-Zade}
            {Integration with respect to the Euler 
             characteristic and its applications}
            {Russian Math.\ Surveys}
            {65}{2010}{399--432}

\bibitem{Karu}
\journal{K.\ Karu}
            {The $cd$-index of fans and posets}
            {Compos.\ Math.}
            {142}{2006}{701--718}


\bibitem{Li}
\journal{L.\ Li}
            {Wonderful compactification of an arrangement of subvarieties}
            {Michigan Math.\ J.}
            {58}{2009}{535--563} 


\bibitem{MacPherson_Procesi}
\journal{R.\ MacPherson and C.\ Procesi}
            {Making conical compactifications wonderful}
            {Selecta Math.\ (N.S.)}
            {4}{1998}{125--139}


\bibitem{Mather}
        {\sc J.\ Mather,}
         Notes on topological stability,
         Harvard University,
         1970.

\bibitem{Moci}
{\sc L.\ Moci,}
{Zonotopes, toric arrangements, and generalized Tutte polynomials.}    
{22nd International Conference on Formal Power Series and
 Algebraic Combinatorics (FPSAC 2010)},
{413--424,} 
{Discrete Math.\ Theor.\ Comput.\ Sci.\ Proc.,}
{AN, Assoc.\ Discrete Math.\ Theor.\ Comput.\ Sci., Nancy, 2010.}



\bibitem{Novik_Postnikov_Sturmfels}
\journal{I.\ Novik, A.\ Postnikov and B.\ Sturmfels}
            {Syzygies of oriented matroids}
            {Duke Math.\ J.}
            {111}{2002}{287--317}

\bibitem{Nyman_Swartz}
\journal{K.\ Nyman and E.\ Swartz}
            {Inequalities for the h-vectors and flag h-vectors
             of geometric lattices}
            {Discrete Comput.\ Geom.}
            {32}{2004}{533--548}



\bibitem{Shnurnikov}
{\sc I.\ N.\ Shnurnikov,}
{Arrangements of codimension 1 submanifolds,}
{\it Mat.\ Sb.}
{\bf 203} (2012), 133--160;
translation in 
{\it Sb.\ Math.}
{\bf 203} (2012), 1357--1382.

\bibitem{Stanley}
\journal{R.\ P.\ Stanley}
            {Flag $f$-vectors and the $cd$-index}
            {Math.\ Z.}
            {216}{1994}{483--499}

\bibitem{EC1}
\book{R.\ P.\ Stanley}
         {Enumerative Combinatorics, Vol 1, second edition}
         {Cambridge University Press, Cambridge}
         {2012}

\bibitem{Stembridge}
\journal{J.\ Stembridge}
            {Enriched $P$-partitions}
            {Trans.\ Amer.\ Math.\ Soc.}
            {349}{1997}{763--788} 


\bibitem{Swartz}
\journal{E.\ Swartz}
            {Face enumeration --- from spheres to manifolds}
            {J.\ Eur.\ Math.\ Soc.\ (JEMS)}
            {11}{2009}{449--485} 


\bibitem{Zaslavsky_1}
{\sc T.\ Zaslavsky}
``Facing up to arrangements: Face-count formulas for partitions of space
by hyperplanes.'' Thesis (MIT, 1974) and Mem. Amer. Math. Soc., No. 154,
Amer. Math. Soc., Providence, R.I., 1975.

\bibitem{Zaslavsky_2}
\journal{T.\ Zaslavsky}
            {A combinatorial analysis of topological dissections}
            {Adv.\ Math.}
            {25}{1977}{267--285}


\bibitem{Ziegler_Zivaljevic}
\journal{G.\ Ziegler and R.\ T.\ \v{Z}ivaljevi\'{c}}
            {Homotopy types of subspace arrangements
                 via diagrams of spaces}
            {Math.\ Ann.}
            {295}{1993}{527--548}


\end{thebibliography}
\end{document}